 \newif\ifHideFoot
\setlist[enumerate]{leftmargin=8.5mm}
\numberwithin{equation}{section}
\newcommand{\cref}[1]{\zcref{#1}}
\newcommand{\Cref}[1]{\zcref[S]{#1}}
  \NewDocumentCommand{\newzctheorem}{mO{#1}m}{
  \newtheorem{#1}[sharedtheoremcounter]{#3}
    \AddToHook{env/#1/begin}{       \zcsetup{countertype={sharedtheoremcounter=#2}}}
}
\newtheorem{teoalpha}{Theorem}
\theoremstyle{remark}
\newenvironment{alphabetize}{\begin{enumerate}[label=(\alph*)]
 }{\end{enumerate}}
\renewcommand{\MR}[1]{}
\newcommand{\Yano}[1]{}
\newcommand{\Jeff}[1]{}
\newcommand{\Charles}[1]{}
\newcommand{\marg}[1]{\normalsize{{
   \color{red}\footnote{{\color{blue}#1}}}{\marginpar[\vskip
   -.25cm{\color{Maroon}\hfill\thefootnote$\implies$}]{\vskip
    -.2cm{\color{Maroon}$\impliedby$\tiny\thefootnote}}}}}
\newcommand{\Yano}[1]{\marg{(Yano) #1}}
\newcommand{\Jeff}[1]{\marg{(Jeff) #1}}
\newcommand{\Charles}[1]{\marg{(Charles) #1}}
\newcommand{\rest}[1]{|_{#1}}
\newcommand{\til}[1]{{\widetilde{#1}}}
\def\inv{^{-1}}
\def\proj{{\mathbb P}}
\def\cx{{\mathbb C}}
\def\rat{{\mathbb Q}}
\def\integ{{\mathbb Z}}
\def\iso{\cong}
\renewcommand{\bar}[1]{{\overline{#1}}}
\DeclareMathOperator{\aut}{Aut}
\DeclareMathOperator{\alb}{Alb}
\DeclareMathOperator{\gal}{Gal}
\DeclareMathOperator{\spec}{Spec}
\DeclareMathOperator{\pic}{Pic}
\DeclareMathOperator{\Ab}{Ab}
\DeclareMathOperator{\A}{A}
\DeclareMathOperator{\chow}{CH}
\DeclareMathOperator{\sym}{Sym}
\DeclareMathOperator{\bwchow}{{\bf CH}}
\DeclareMathOperator{\wc}{WC}
\DeclareMathOperator{\albmorphism}{alb}
\newcommand{\st}[1]{\left\{#1\right\}}
\def\tplus{\boxplus}
 \title{Regular homomorphisms, with a twist}
\author{Jeffrey D. Achter}
\address{Colorado State University, Department of Mathematics,
 Fort Collins, CO 80523,
 USA}
\email{j.achter@colostate.edu}
\author{Sebastian Casalaina-Martin }
\address{University of Colorado, Department of Mathematics,
 Boulder, CO 80309, USA }
\email{casa@math.colorado.edu}
\author{Charles Vial}
\address{Universit\"at Bielefeld, Fakult\"at f\"ur Mathematik, Germany}
\email{vial@math.uni-bielefeld.de}
\thanks{Research of the first and second authors is supported in part by grants from the Simons Foundation (637075 and 581058, respectively). Research of the third author is supported by the Deutsche Forschungsgemeinschaft (DFG, German Research Foundation) – Project-ID 491392403 – TRR 358.
}
\begin{document}

  \begin{abstract}
 Let $X/K$ be a variety over a field, and $A/K$ an abelian variety.  A regular homomorphism to $A$ (in codimension $i$)  induces, for every smooth geometrically connected pointed $K$-scheme $(T,t_0)$ and every cycle class $Z \in \chow^i(T\times X)$, a morphism $T \to A$ of varieties over $K$.
 In this note we show that,  even if $T$ admits no $K$-point, the data $(T,Z)$ determines a torsor $A^{(T,Z)}$ over $K$ under $A$ and a $K$-morphism $T \to A^{(T,Z)}$.  
 This can be used to provide an obstruction to the existence of algebraic cycles defined over $K$.  
We then connect this obstruction to some recent results of Hassett--Tschinkel and Benoist--Wittenberg on rationality of threefolds.   
 \end{abstract}

  \maketitle
    \setcounter{tocdepth}{1}

\section{Introduction}
 
Let $K$ be a field with separable closure $\bar K$. We will use the convention that a \emph{variety} over $K$ is a geometrically reduced separated scheme of finite type
over $K$. 
For  a smooth proper
 variety $X$ over~$K$, abelian varieties can be used to understand $\A^i(X_{\bar K})$, the group of rational equivalence classes of  algebraically trivial cycles on $X_{ \bar K}$ of codimension $i$,  via the notion of a regular homomorphism. 

Recall that if  $A/K$ is an abelian variety, then a regular homomorphism over $K$, or Galois-equivariant regular homomorphism, is a group homomorphism $\phi\colon \A^i(X_{\bar K}) \to A(\bar K)$ with the following property:  For any smooth connected $K$-pointed variety
  $(T,t_0)$ over $K$, and any cycle class $Z \in \chow^i(T\times X)$, the map of sets
\[
\xymatrix{
T(\bar K) \ar[r] & \A^i(X_{\bar K}) \ar[r]^\phi & A(\bar K)\\
t \ar@{|->}[r] & [Z_t]-[Z_{t_0}] \ar@{|->}[r] & \phi([Z_t]-[Z_{t_0}])
}
\]
is induced by a morphism $\psi_Z = \psi_{(T,t_0,Z)}: T \to A$ of schemes over $K$. 
 (This concrete formulation of the functorial definition in \cite{ACMVfunctor} is both closer to the classical definition and better-suited to our purposes here.) 
An algebraic representative for $X$ (in codimension $i$) is an abelian variety $\Ab^i_{X/K}$, and a regular homomorphism $\A^i(X_{\bar K}) \to \Ab^i_{X/K}(\bar K)$, which is initial for all such maps.

At this point, the reader should keep in mind two slightly different kinds of examples of regular homomorphisms.
On one hand, $\Ab^1_{X/K}$ and $\Ab^{\dim X}_{X/K}$ always exist;
these are, respectively, $(\pic^0_{X/K})_{\operatorname{red}}$ and $\alb_{X/K}$.  Moreover,
 $\Ab^2_{X/K}$ exists, as well; this is due to Murre \cite{murre83} if $K$ is algebraically closed and to \cite[Thm.~6.1]{ACMVfunctor} in general.
On the other hand, suppose $K$ is a subfield of $\cx$.  By \cite[Thm.~A]{ACMVdmij} (see also \cite[Thm.~9.1]{ACMVfunctor}), 
the algebraic intermediate Jacobian $J_a^{2i-1}(X_\cx)$, which is by definition the image of the Abel--Jacobi map restricted to algebraically trivial cycle classes, admits a distinguished model $J^{2i-1}_{a,X/K}$ over $K$, and the Griffiths Abel--Jacobi map descends to a regular homomorphism $\A^i(X_{\bar K}) \to J^{2i-1}_{a,X/K}(\bar K)$.

Recently, \emph{torsors} under such abelian varieties have been used to detect  irrationality of geometrically rational varieties.  
For example, suppose $X$ is a smooth projective variety over  a subfield $K$ of $\cx$, and let $J = J^{2i-1}_{a,X/K}$.  Given a geometrically irreducible component $T$ of the Chow scheme of codimension-$i$ cycles on $X$, Hassett and Tschinkel construct \cite[Thm.~4.5]{hassetttschinkel21} a torsor $J^T$ under $J$; and they further show that this construction is compatible with addition in Chow, i.e., that in the Weil--Ch\^atelet group of $J$ one has $[J^{T_1}] + [J^{T_2}] = [J^{T_1\times T_2}]$. 
If now $X$ is a smooth projective geometrically rational threefold over an arbitrary field $K$,  Benoist and Wittenberg construct a codimension-2 Chow scheme $\bwchow^2_{X/K}$ that represents a functor which is a certain subquotient of K-theory.  
Its connected component of the identity $(\bwchow^{2}_{X/K})^\circ$ is an abelian variety -- isomorphic to $\Ab^2_{X/K}$, if $K$ is perfect -- and so its other geometrically irreducible components are torsors under that abelian variety.  
Benoist and Wittenberg \cite{benoistwittenberg23} and Hassett and Tschinkel \cite{hassetttschinkel21} use these torsors to construct an obstruction to the rationality of the smooth complete intersection of two quadrics in $\proj^5$.
Subsequently, Frei \emph{et al.} studied extensions of, and limitations to, this so-called intermediate Jacobian torsor obstruction, especially for certain conic bundles over $\proj^2$ \cite[Thms.~1.4 and~1.5]{freietal24}.

With this backdrop, we can finally explain the goal of the present
note.  It turns out that the various torsors constructed in
\cite{hassetttschinkel21} and \cite{benoistwittenberg23} require
neither (complex) intermediate Jacobians nor a functorial Chow scheme.
Instead, they arise from an arbitrary regular homomorphism. 

\begin{teoalpha}\label{T:main}
Let $X/K$ be a smooth proper variety, let $A/K$ be an abelian variety, and let $\phi:\A^i(X_{\bar K}) \to A(\bar K)$ be a regular homomorphism over $K$.
\begin{alphabetize}
\item Let $T/K$ be a smooth geometrically connected scheme, and let $Z \in \chow^i(T\times X)$.
Then there exists a torsor $A^{(T,Z)}$ under $A$ such that any choice of $\bar K$-point $t_0 \in T(\bar K)$ induces a ($K$-rational) morphism $T \to A^{(T,Z)}$ which, after base change to $\bar K$, agrees with $\psi_{(T_{\bar K}, t_0, Z_{\bar K})}$.

\item \label{T:mainb} Let $T_1$ and $T_2$ be smooth geometrically connected schemes over $K$, and let $Z_j \in \chow^i(T_j\times X)$.  Then there is an equality of torsor classes
\[
[A^{(T_1,Z_1)}] + [A^{(T_2,Z_2)}] = [A^{(T_1\times T_2,Z_1\tplus Z_2)}],
\]
where addition takes place in the Weil--Ch\^atelet group $\wc(A/K) \iso H^1(K,A)$, and $Z_1\tplus Z_2 = p_{13}^*Z_1 + p_{23}^*Z_2 \in \chow^i(T_1\times T_2 \times X)$.
\end{alphabetize}
\end{teoalpha}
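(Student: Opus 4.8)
The plan is to extract, from the family of pointed morphisms $\psi_{(T,t_0,Z)}$, a single base-point-free \emph{difference morphism} defined over $K$, to use it to twist $A$, and then to read off both the torsor in (a) and the additivity in (b) at the level of cocycles. First I would base change to $\bar K$. Since a regular homomorphism over $K$ stays regular after base change to $\bar K$ (cf.\ \cite{ACMVfunctor}), every $t_0\in T(\bar K)$ gives a $\bar K$-morphism $\psi_{t_0}\colon T_{\bar K}\to A_{\bar K}$ inducing $t\mapsto\phi([Z_t]-[Z_{t_0}])$. The crucial observation is that the $\bar K$-morphism $(t,t')\mapsto\psi_{t_0}(t)-\psi_{t_0}(t')$ is \emph{independent of $t_0$} — on $\bar K$-points it is simply $(t,t')\mapsto\phi([Z_t]-[Z_{t'}])$ — so it defines a canonical morphism $\Delta\colon T_{\bar K}\times_{\bar K}T_{\bar K}\to A_{\bar K}$.

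Next I would descend $\Delta$. Because $Z$ is defined over $K$ and $\phi$ is Galois-equivariant, one checks (on $\bar K$-points) that ${}^\sigma\psi_{t_0}=\psi_{{}^\sigma t_0}$ for every $\sigma\in\gal(\bar K/K)$, and plugging this into the base-point-free description gives ${}^\sigma\Delta=\Delta$; since $T\times_K T$ is a $K$-variety and $A$ is separated, $\Delta$ descends to a $K$-morphism $\Delta_Z\colon T\times_K T\to A$, which on $\bar K$-points satisfies $\Delta_Z(t,t)=0$ and $\Delta_Z(t,t')+\Delta_Z(t',t'')=\Delta_Z(t,t'')$. To prove (a): fixing $t_0\in T(\bar K)$ and setting $c_\sigma:=\Delta_Z({}^\sigma t_0,t_0)$, the additivity of $\Delta_Z$ together with $\Delta_Z({}^\sigma t_1,{}^\sigma t_2)={}^\sigma\Delta_Z(t_1,t_2)$ yields the cocycle relation $c_{\sigma\tau}={}^\sigma c_\tau+c_\sigma$, and $(c_\sigma)$ is continuous since ${}^\sigma t_0$ depends only on the image of $\sigma$ in a finite quotient of $\gal(\bar K/K)$. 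I then let $A^{(Z)}$ be the torsor under $A$ attached to this cocycle — concretely $A_{\bar K}$ with the twisted Galois action $a\mapsto{}^\sigma a+c_\sigma$, which descends to a $K$-scheme because $A$ is projective (torsors under an abelian variety over a field are representable). The same additivity identity, rewritten as $\psi_{t_0}({}^\sigma t)={}^\sigma\psi_{t_0}(t)+c_\sigma$, says exactly that $\psi_{t_0}$ is equivariant for the standard action on $T_{\bar K}$ and the twisted action on $A_{\bar K}$, so it descends to the desired $K$-morphism $T\to A^{(Z)}$ whose base change to $\bar K$ recovers $\psi_{(Z_{\bar K},t_0,T_{\bar K})}$. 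Replacing $t_0$ by $t_0'$ changes $(c_\sigma)$ by the coboundary of $b=\Delta_Z(t_0',t_0)$, and translation by $b$ identifies the two twisted forms compatibly with these morphisms, so $A^{(Z)}$ and its class $[\,\sigma\mapsto\Delta_Z({}^\sigma t_0,t_0)\,]\in\wc(A/K)\cong H^1(K,A)$ are well defined. (Alternatively, $\Delta_Z$ can be used directly as an fppf descent datum to descend the trivial torsor $T\times_K A$ over $T$ along $T\to\spec K$, the cocycle identity for $\Delta_Z$ being precisely what is required.)

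For (b) I would apply (a) with $T=T_1\times_K T_2$ and $Z=Z_1\boxplus Z_2$. Compatibility of the Gysin pullback with the projections gives $(Z_1\boxplus Z_2)_{(t_1,t_2)}=(Z_1)_{t_1}+(Z_2)_{t_2}$ in $\chow^i(X_{\bar K})$, so additivity of $\phi$ forces $\Delta_{Z_1\boxplus Z_2}=\Delta_{Z_1}\circ(p_1\times p_1)+\Delta_{Z_2}\circ(p_2\times p_2)$ on $T\times_K T$. Evaluating at $({}^\sigma t_0,t_0)$ with $t_0=(t_0^{(1)},t_0^{(2)})$ then exhibits the defining cocycle of $A^{(Z_1\boxplus Z_2)}$ as the sum of those of $A^{(Z_1)}$ and $A^{(Z_2)}$, which is the asserted identity $[A^{(Z_1)}]+[A^{(Z_2)}]=[A^{(Z_1\boxplus Z_2)}]$ in $H^1(K,A)$.

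The main obstacle is the construction of $\Delta_Z$ over $K$: one must know that $\phi$ survives base change to $\bar K$ as a regular homomorphism (so the $\psi_{t_0}$ exist at all), and — the real point — organize the family $\{\psi_{t_0}\}_{t_0\in T(\bar K)}$, no single member of which need be defined over $K$, into the one base-point-free morphism $\Delta_Z$, whose Galois-invariance is what unlocks Galois descent. Once $\Delta_Z$ is available the rest is cocycle bookkeeping; the only steps requiring care are the precise comparison between the morphisms $T\to A^{(Z)}$ attached to different base points (the coboundary computation above) and the representability of the twisted form, both of which are routine.
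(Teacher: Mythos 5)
Your proposal is correct and follows essentially the same route as the paper: you twist $A$ by the cocycle $\sigma\mapsto\phi(Z_{t_0^\sigma}-Z_{t_0})$, descend $\psi_{t_0}$ via its equivariance for the twisted action, show independence of $t_0$ by a coboundary, and obtain additivity in (b) from $(Z_1\boxplus Z_2)_{(t_1,t_2)}=(Z_1)_{t_1}+(Z_2)_{t_2}$, exactly as in the paper's proof of Theorem~\ref{L:deftorsor}. Your intermediate base-point-free morphism $\Delta_Z\colon T\times_K T\to A$ is just a repackaging of the paper's cycle $\til Z=Z\tplus(-Z)$ on $T\times T\times X$, so the difference is organizational rather than substantive (and your coboundary is that of $\Delta_Z(t_0,t_0')$ rather than $\Delta_Z(t_0',t_0)$, an immaterial sign).
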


There is a variant of this which works when the parametrizing scheme $T$ is not geometrically connected; we work out the details in \Cref{L:geomdisconnected}.

In work in progress, the authors use the ideas of \S\ref{S:torsors}
to construct a \emph{big algebraic representative} for $X$.  This is a group scheme
whose connected component of the identity is the (usual) algebraic
representative, and which supports an action by $\aut(X)$.
In~\S\ref{S:rationality}, we explain how these torsors  give a conceptual framework for understanding (failures of) rationality and existence of algebraic cycles for smooth projective varieties over a field, and we close by revisiting the rationality of smooth complete intersections of quadrics in $\proj^5$.

\subsection*{Acknowledgments}

The first-named author thanks the American Institute of Mathematics
for the opportunity to participate in the March 2024 workshop
\emph{Degree $d$ points on algebraic surfaces}, and Nathan Chen for
conversations which sparked this note. We all thank the referees for
their helpful suggestions.

\section{Torsors under algebraic representatives}
\label{S:torsors}

\subsection{Torsors}
\label{SS:torsors}

  Let $X/K$ be a smooth proper variety; we investigate codimension $i$ cycles on $X_{\bar K}$.  Let $A/K$ be an abelian variety, and as in the introduction let $\phi: \A^i(X_{\bar K}) \to A(\bar K)$ be a regular homomorphism over $K$. 
                             
  If $T_1$ and $T_2$ are both smooth varieties over $K$, and if $Z_j \in \chow^i(T_j \times X)$ for $j = 1,2$, we define
  \[
    Z_1 \tplus Z_2 = p_{13}^* Z_1 + p_{23}^* Z_2 \in \chow^i(T_1\times T_2 \times X).
  \]
  If $Z \in \chow^i(T\times X)$ then, following \cite[\S 7.1]{ACMVdmij}, we set
  \begin{equation}
    \label{E:deftilZ}
    \til Z =  Z \tplus (-Z) \in \chow^i(T\times T \times X).
  \end{equation}
  Every fiber of $\til Z$ is algebraically trivial.  Indeed, for $\bar K$-points $t$ and $t'$ of $T$ we have $\til Z_{t,t'} = Z_t - Z_{t'} \in \A^i(X_{\bar K})$.

  Whereas cycle classes parametrized by $K$-\emph{pointed} schemes give rise to morphisms to $A$, in general, we obtain a map to a certain torsor under $A$, as follows.
  
  Before proceeding, recall that if $B/K$ is an abelian variety, then $\wc(B/K)$, the Weil--Ch\^atelet group of $B$ over $K$, is an abelian group whose elements are isomorphism classes of torsors under $B$ over $K$.  (If the base field $K$ is clear from context, we will sometimes just write $\wc(B)$.) It is isomorphic to the Galois cohomology group $H^1(K, B) \coloneqq H^1(\gal(K),B(\bar K))$.  If $\beta: \gal(K) \to B(\bar K)$ represents a cohomology class, we may describe the corresponding torsor $B[\beta]$ by adopting the viewpoint that $B[\beta]$ ``is'' $B$ equipped with a twisted action of $\gal(K)$ on $B(\bar K)$, as in   \cite[\S
    5.2]{serreGC}.     For $b \in B(\bar K)$ and
  $\sigma\in \gal(K)$, denote the image of $b$ under $\sigma$ by
  $b^\sigma$; and let $b^{\til \sigma}$ denote its image under the
  action of $\gal(K)$ twisted by $\beta$.  Then we have
  \begin{equation}
  \label{E:twistedgalois}
  b^{\til\sigma} = \beta_\sigma +_{B(\bar K)} b^\sigma.
  \end{equation}

  \begin{proof}[Proof of \Cref{T:main}]
    We start by verifying that the function
      \[
  \xymatrix{
    \gal(K) \ar[r]^{\alpha = \alpha(T,t_0,Z)} & A(\bar K) \\
    \sigma \ar@{|->}[r] & \alpha_\sigma \coloneqq \phi(Z_{t_0^\sigma} -
    Z_{t_0})
  }
\]
is a one-cocycle, and thus its class in $H^1(K,A)$  determines a torsor $A^{(T,t_0,Z)}$ over $K$ under $A$.  We compute that
    \begin{align*}
      \alpha_\sigma &= \phi(\til Z_{t_0^\sigma,t_0})\\
      (\alpha_\sigma)^\tau &= \phi(\til Z_{t_0^\sigma,t_0})^\tau \\
      \intertext{and, by Galois-equivariance of $\phi$, we have}
    (\alpha_\sigma)^\tau      &= \phi((\til Z_{t_0^\sigma,t_0}      )^\tau) \\
      &= \phi(\til Z_{t_0^{\sigma\tau}, t_0^\tau})
      \intertext{because $Z$ is defined over $K$. We verify the cocycle condition by computing}
          (\alpha_\sigma)^\tau +\alpha_\tau &= \phi(\til
      Z_{t_0^{\sigma\tau},t_0^\tau} + \til Z_{t_0^\tau,t_0}) \\
     & = \phi(\til Z_{t_0^{\sigma\tau},t_0}) = \alpha_{\sigma\tau}.
    \end{align*}
    We now show that $\psi \coloneqq \psi_{(T_{\bar K}, t_0, Z_{\bar K})}:T_{\bar K} \to A_{\bar K}$ descends to a $K$-rational morphism $T \to A^{(T,t_0,Z)}$.  Recall that $A^{(T,t_0,Z)}$ is $A$ equipped with a twisted action of $\gal(K)$ \eqref{E:twistedgalois}, and
                          consider the morphism
  \[
  \xymatrix{
    T_{\bar K} \ar[rr]^{\psi} & & A_{\bar K}
  }
  \]
  of varieties over $\bar K$.  Identifying $(A_{\bar K},0_{A_{\bar K}})$ with $(A^{(T,t_0,Z)}_{\bar K},\psi(t_0))$, and
  in particular $A(\bar K)$ with $A^{(T,t_0,Z)}(\bar K)$, we have a morphism
  \[
  \xymatrix{
    T_{\bar K} \ar[r]^{\psi\quad } & A^{(T,t_0,Z)}_{\bar K} }
\]
which on  $\bar K$-points is given by 
  \[
\xymatrix{
     t\ar@{|->}[r] & \phi(Z_t-Z_{t_0}).
  }
  \]
To show that  $\psi$ descends to a morphism $T \to A^{(T,t_0,Z)}$
  over $K$, it suffices to show that $\psi$ is $\gal(K)$-equivariant on $\bar K$-points. This follows from the calculation that, for $\sigma \in \gal(K)$ and $t\in T(\bar K)$, we have 
  \begin{align*}
    \psi(t)^{\til \sigma} &= \alpha_\sigma + \psi(t)^\sigma \\
    &= \phi(Z_{t_0^\sigma} - Z_{t_0}) + \phi(Z_t - Z_{t_0})^\sigma \\
    &= \phi(Z_{t_0^\sigma} - Z_{t_0}) + \phi(Z_{t^\sigma} -
    Z_{t_0^\sigma}) \\
    &= \phi(Z_{t^\sigma} - Z_{t_0}) \\
    &= \psi(t^\sigma).
  \end{align*}
  To show the independence of the choice of geometric base point, let $t_1 \in T(\bar K)$ be any other point, with
corresponding cocycle $\beta_\sigma = \phi(\til
Z_{t_1^\sigma,t_1})$.  Then the difference of $\alpha$ and $\beta$ is
a coboundary, since
\begin{align*}
  \alpha_\sigma - \beta_\sigma &= \phi(\til Z_{t_0^\sigma, t_0} -
  \til Z_{t_1^\sigma,t_1}) \\
  &=
  \phi(\til Z_{t_0^\sigma,t_1^\sigma} - \til Z_{t_0,t_1}) \\
  &= \phi(\til Z_{t_0,t_1})^\sigma - \phi(\til Z_{t_0,t_1}).
\end{align*}
In fact, a similar calculation shows that the map
\[
\xymatrix@R-1pc{
  A^{(T,t_0,Z)}_{\bar K} \ar[r]& A^{(T,t_1,Z)}_{\bar K} \\
  P \ar@{|->}[r] & P + \phi(Z_{t_0}-Z_{t_1})
  }
\]
  is $\gal(K)$-equivariant on points (because we twist on the source by $\alpha$ and on the target by $\beta$), and thus descends to $K$ as an isomorphism $\epsilon_{t_0,t_1}$ which makes the following diagram commute:
  \begin{equation}
    \label{E:isomtorsor}
    \xymatrix@R-0.5pc{
      & A^{(T,t_0,Z)}\ar[dd]^{\epsilon_{t_0,t_1}}_\sim \\
    T \ar[ur]^{\psi_{(T,t_0,Z)}} \ar[dr]_{\psi_{(T,t_1,Z)}}\\
    & A^{(T,t_1,Z)}
    }
    \end{equation}

Finally, we verify that the operator $\tplus$ on cycles is compatible with addition in the Weil--Ch\^atelet group of torsors over $K$ under $A$.  Choose $t_j \in T_j(\bar K)$, and let $\alpha^{(j)}$ be the corresponding cocycle.  Then $A^{(T_1\times T_2, t_1\times t_2, Z_1\tplus Z_2)}$ is determined by the cocycle
\begin{align*}
  \sigma \mapsto & \ \phi((Z_1\tplus Z_2)_{(t_1,t_2)^\sigma} - (Z_1\tplus Z_2)_{(t_1,t_2)}) \\
         &= \phi( {Z_1}_{t_1^\sigma} + {Z_2}_{t_2^\sigma} - {Z_1}_{t_1} - {Z_2}_{t_2}) \\
    &    = \alpha^{(1)}_\sigma + \alpha^{(2)}_\sigma.
\end{align*}

  \end{proof}
  
  The isomorphism class of $A^{(T,Z)}$ depends only on the algebraic equivalence class of a fiber of $Z$:
  
  \begin{pro}
  \label{P:algequivtorsors}
  For $j=1,2$, let $T_j/K$ be a smooth geometrically connected variety, and let $Z_j \in \chow^i(T_j\times X)$.  Suppose that there are  $\bar K$-points $t_j \in T_j(\bar K)$ such that $(Z_1)_{t_1}$ and $(Z_2)_{t_2}$ are algebraically equivalent. Then there is an isomorphism of $K$-torsors $A^{(T_1, Z_1)} \iso A^{(T_2, Z_2)}$.
  \end{pro}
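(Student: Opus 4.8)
The plan is to show that $A^{(T_1,Z_1)}$ and $A^{(T_2,Z_2)}$ have the same class in $\wc(A/K)\iso H^1(K,A)$ by exhibiting their defining cocycles as cohomologous. By part~(b) of \Cref{L:deftorsor} the class of $A^{(T_j,Z_j)}$ is represented by the cocycle $\alpha^{(j)}_\sigma = \phi\big((Z_j)_{t_j^\sigma}-(Z_j)_{t_j}\big)$ attached to the given $\bar K$-point $t_j$; here each fiberwise difference $(Z_j)_{t_j^\sigma}-(Z_j)_{t_j}$ lies in $\A^i(X_{\bar K})$ since $T_j$ is smooth and geometrically connected, exactly as in the proof of that theorem. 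I would then set $w=\phi\big((Z_1)_{t_1}-(Z_2)_{t_2}\big)\in A(\bar K)$, which makes sense precisely because $(Z_1)_{t_1}$ and $(Z_2)_{t_2}$ are algebraically equivalent by hypothesis, so that $(Z_1)_{t_1}-(Z_2)_{t_2}\in\A^i(X_{\bar K})$.

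The crux is then a short cocycle computation. Using additivity of the group homomorphism $\phi$ on algebraically trivial classes, then that $Z_1$ and $Z_2$ are defined over $K$ so that $(Z_j)_{t_j^\sigma}=\big((Z_j)_{t_j}\big)^\sigma$, and finally Galois-equivariance of $\phi$, one finds
\begin{align*}
\alpha^{(1)}_\sigma-\alpha^{(2)}_\sigma
&=\phi\big((Z_1)_{t_1^\sigma}-(Z_2)_{t_2^\sigma}\big)-\phi\big((Z_1)_{t_1}-(Z_2)_{t_2}\big)\\
&=\phi\Big(\big((Z_1)_{t_1}-(Z_2)_{t_2}\big)^\sigma\Big)-w\\
&=w^\sigma-w.
\end{align*}
Hence $\alpha^{(1)}-\alpha^{(2)}$ is the coboundary attached to $w$, so $[\alpha^{(1)}]=[\alpha^{(2)}]$ in $H^1(K,A)$. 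Concretely, translation by $w$ on $\bar K$-points defines a map $A^{(T_1,Z_1)}\to A^{(T_2,Z_2)}$ that is $A$-equivariant and, by the identity $\alpha^{(1)}_\sigma-\alpha^{(2)}_\sigma=w^\sigma-w$, intertwines the two $\gal(K)$-actions twisting by $\alpha^{(1)}$ and by $\alpha^{(2)}$; it therefore descends to the sought isomorphism of $K$-torsors $A^{(T_1,Z_1)}\iso A^{(T_2,Z_2)}$.

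The one place that needs a careful word rather than a calculation is the bookkeeping of algebraic triviality that legitimizes each application of $\phi$. Besides the fiberwise differences handled as in \Cref{L:deftorsor} and the class $(Z_1)_{t_1}-(Z_2)_{t_2}$ that is algebraically trivial by hypothesis, one needs its conjugate $(Z_1)_{t_1^\sigma}-(Z_2)_{t_2^\sigma}=\big((Z_1)_{t_1}-(Z_2)_{t_2}\big)^\sigma$ to be algebraically trivial as well. This holds because algebraic equivalence over $\bar K$ is stable under the action of $\gal(K)$: applying $\sigma$ to a smooth connected variety together with a cycle and two marked points witnessing $(Z_1)_{t_1}\equiv (Z_2)_{t_2}$ produces such data witnessing $(Z_1)_{t_1^\sigma}\equiv (Z_2)_{t_2^\sigma}$. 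Once these membership statements are recorded, the proof is just the coboundary manipulation already carried out in part~(b) of \Cref{L:deftorsor}.
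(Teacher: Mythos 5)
Your proof is correct, and it takes a slightly different route from the paper's. The paper's own proof is a two-line formal deduction from \Cref{L:deftorsor}(c): since every $\bar K$-fiber of $Z_1$ is algebraically equivalent to every $\bar K$-fiber of $Z_2$, one writes $[A^{(Z_1)}]-[A^{(Z_2)}]=[A^{(Z_1)}]+[A^{(-Z_2)}]=[A^{(Z_1\boxplus(-Z_2))}]=[A]$ in $\wc(A/K)$, the last equality holding because the fibers of $Z_1\boxplus(-Z_2)$ are themselves algebraically trivial, so the associated cocycle is the coboundary of $\phi\big((Z_1)_{t_1}-(Z_2)_{t_2}\big)$. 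You bypass the additivity statement and the auxiliary family $Z_1\boxplus(-Z_2)$ altogether and check directly at the cocycle level that $\alpha^{(1)}-\alpha^{(2)}$ is the coboundary of $w=\phi\big((Z_1)_{t_1}-(Z_2)_{t_2}\big)$ --- the very same trivializing element --- so the two arguments rest on an identical mechanism, packaged differently. Your version buys an explicit isomorphism (translation by $w$, with the sign correctly matched to the twisted actions in \eqref{E:twistedgalois}) rather than only an equality of classes in $\wc(A/K)$, and it records explicitly the two points the paper leaves implicit: that $\phi$ may only be applied to algebraically trivial classes, and that algebraic equivalence over $\bar K$ is stable under $\gal(K)$, which is what legitimizes applying $\phi$ to $\big((Z_1)_{t_1}-(Z_2)_{t_2}\big)^\sigma$. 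The paper's version buys brevity by reusing the already-proved additivity, at the cost of silently invoking both that stability and the easily checked identity $[A^{(-Z_2)}]=-[A^{(Z_2)}]$.
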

  
  \begin{proof}
In fact, by hypothesis every $\bar K$-fiber of $Z_1$ is algebraically equivalent to every $\bar K$-fiber of $Z_2$.  Therefore, in $\wc(A/K)$ we have an equality
\[
\null[A^{(T_1,Z_1)}] - \null[A^{(T_2,Z_2)}] = [A^{(T_1,Z_1)}] + [A^{(T_2,-Z_2)}] = [A^{(T_1\times T_2,Z_1\tplus (-Z_2))}] = [A].
\]
Here the first equality holds because $-Z_2$ has cocycle $-\alpha^{(2)}$, and the second is \Cref{T:main}(b).  For the third, write $W = Z_1 \tplus (-Z_2)$ and fix a $\bar K$-point $p$ of $T_1\times T_2$.  Every fiber of $W$ is algebraically trivial, so $\phi(W_p)$ is defined, and the cocycle of $(T_1\times T_2, p, W)$ is
\[
\sigma \mapsto \phi(W_{p^\sigma}) - \phi(W_p) = \phi(W_p)^\sigma - \phi(W_p),
\]
a coboundary.
\end{proof}

  \subsection{Beyond geometric connectedness}

  If the smooth parameter space $T$ is connected but not geometrically connected, 
  it seems
  unreasonable to expect that the theory of cycles will naturally
  induce a map from $T$ to a (torsor under an) abelian variety (e.g.,
  \cite[Cor.~1.5]{ACMValb}).  Nonetheless, we will see in this section that  a cycle class $Z
  \in \chow^i(T\times X)$ naturally induces a morphism to a $K$-scheme
  which, after base change, is isomorphic to a disjoint union of
  copies of $A$.

  Let $L\subseteq \bar K$
  be a finite separable extension of $K$.  (Note that $\bar K$ is also
  a separable closure of $L$.)  Then a regular homomorphism $\phi:\A^i(X_{\bar K})
  \to A(\bar K)$ induces a   regular homomorphism
  $\A^i((X_L)_{\bar K}) \to A_L(\bar K)$ over $L$ (\cite[Lemma
    2.2]{ACMVfunctor}).

  Now suppose $\tau \in \gal(K)$; let $L^\tau \coloneqq \tau(L)$.  We
  have an isomorphism
  \[
  \xymatrix{\gal(L) = \gal(\bar K/L) \ar[r]^\sim & \gal(\bar K/L^\tau)
    \\
    \sigma \ar@{|->}[r] & \sigma' \coloneqq \tau\inv \sigma \tau.
  }
  \]
If  $W$ is a scheme over $L$, set $W^\tau \coloneqq \tau^* W$; it is naturally a
  scheme over $L^\tau$.  Then -- remember we are letting Galois groups
  act on the right -- if $P^\tau \in W^\tau(\bar K)$ and $\sigma' \in
  \gal(L^\tau)$, then
  \[
  (P^\tau)^{\sigma'} = (((P^\tau)^{\tau\inv})^\sigma)^\tau =
  (P^\sigma)^\tau.
\]

  Let $B/L$ be an abelian variety, and let $\beta:\gal(L) \to B(\bar K)$ be a one-cocycle. Let $\beta^\tau$ be the one-cocycle $\gal(L^\tau) \to B^\tau(\bar K)$ which makes the following diagram commute:
  \[
  \xymatrix{
  \sigma \ar@{|->}[d] & \gal(L) \ar[d] \ar[r]^\beta & B(\bar K) \ar[d] & P \ar@{|->}[d] \\
  \sigma' \coloneqq \tau\inv \sigma \tau & \gal(L^\tau) \ar[r]^{\beta^\tau} & B^\tau(\bar K)& P^\tau
  }
  \]
By construction, for each $\sigma' \in \gal(L^\tau)$ we have
  \[
  (\beta^\tau)_{\sigma'} = (\beta_\sigma)^\tau.
  \]
By analyzing the induced actions of $\gal(L^\tau)$ on $B^\tau(\bar
K)$, we see that there is an isomorphism
  \begin{equation}
  \label{E:pullbacktorsor}
  B^\tau[\beta^\tau] \iso (B[\beta])^\tau
  \end{equation}
  of torsors over $L^\tau$ under $B^\tau$.  Indeed, if $P^\tau \in
  B^\tau(\bar K)$ and $\sigma' \in \gal(L^\tau)$, then the action of
  $\sigma'$ on $P^\tau$, twisted by $\beta^\tau$, is
  \[
(P^\tau)^{\til{\sigma'}} = (P^\tau)^{\sigma'} + (\beta^\tau)_{\sigma'}
  = (P^\sigma)^\tau+(\beta_\sigma)^\tau =(P^{\til\sigma})^\tau.
  \]

  \begin{lem}
  \label{L:LKpullback}
    Suppose that $T/L$ is a smooth geometrically connected scheme, and
    let $Z \in \chow^i(T \times_L (X_L))$.  Let $t_0 \in T(\bar K)$ be
    a $\bar K$-point.  Then
    \begin{alphabetize}
    \item $A^{(T^\tau, t_0^\tau, Z^\tau)} \iso (A^{(T,t_0,Z)})^\tau$;
              \item $\psi_{(T^\tau, t_0^\tau, Z^\tau)} =
                (\psi_{(T,t_0,Z)})^\tau$; and
                 \item if $t_1 \in T(\bar K)$ is a second geometric point, then $(\epsilon_{t_0,t_1})^\tau = \epsilon_{t_0^\tau, t_1^\tau}: A^{(T^\tau,t_0^\tau, Z^\tau)} \stackrel \sim\to A^{(T^\tau, t_1^\tau, Z^\tau)}$.
    \end{alphabetize}
  \end{lem}
 
  \begin{proof}
 The one-cocycle corresponding to the pulled-back data
    $(T^\tau, t_0^\tau, Z^\tau)$ is
    \begin{align*}
      \alpha(T^\tau, t_0^\tau, Z^\tau)_{\sigma'} & =
      \phi((Z^\tau)_{t_0^{\tau\sigma'}}- (Z^\tau)_{t_0^\tau}). \\
      \intertext{Because Galois acts on the right, we rewrite this as}
      &= \phi((Z^\tau)_{t_0^{\tau\tau\inv \sigma \tau}} -
      (Z_{t_0})^\tau) \\
      &= \phi((Z^\tau)_{t_0^{\sigma\tau}} - (Z_{t_0})^\tau) \\
      &= \phi((Z_{t_0^\sigma} - Z_{t_0})^\tau) \\
  &= \phi(Z_{t_0^\sigma} - Z_{t_0})^\tau \\
       &= (\alpha(T,t_0,Z)_\sigma)^\tau.
    \end{align*}
    Therefore $\alpha(T^\tau, t_0^\tau, Z^\tau) = \alpha(T,t_0,Z)^\tau$, and (a) follows from \eqref{E:pullbacktorsor}.
    
    For (b), it suffices to verify the commutativity of the diagram
    \[
    \xymatrix{T \ar[rr]^{\psi_{(T,t_0,Z)}} \ar[d]^{\tau^*} && A^{(T,t_0,Z)}\ar[d]^{\tau^*} \\
    T^\tau \ar[rr]^{\psi_{(T^\tau,t_0^\tau,Z^\tau)}} && A^{(T^\tau,t_0^\tau,Z^\tau)}
    }
    \]
    on $\bar K$-points.  Since 
    \[
    \tau^*\psi_Z(t) = \tau^*\phi(Z_t - Z_{t_0}) = \phi(Z_t-Z_{t_0})^\tau
    \]
    while
    \[
    \psi_{Z^\tau} (\tau^* t) = \psi_{Z^\tau}(t^\tau) = \phi(Z^\tau_{t^\tau} - Z^\tau_{t_0^\tau}),
    \]
    the claim (again) follows from the $\gal(K)$-equivariance of $\phi$.

    Part (c) is similar; one simply remembers that, on points, $\epsilon_{t_0,t_1}$ is translation by $\phi(Z_{t_0}-Z_{t_1})$.
  \end{proof}
  
  If $W$ is a scheme, let $\Pi_0(W)$ denote its set of irreducible components.  If $W$ is a scheme over $K$, let $\underline\pi_0(W)$ be its scheme of irreducible components.  It is an \'etale $K$-scheme, and $\underline\pi_0(W)(L)$ is functorially identified with $\Pi_0(W_L)$.
  
  \begin{pro}
  \label{L:geomdisconnected}
Suppose $T/K$ is a smooth connected scheme, and let $Z \in \chow^i(T\times X)$.
Let $t_0 \in T(\bar K)$ be a $\bar K$-point, and let $L \subseteq \bar K$ be a finite Galois extension of $K$ such that each irreducible component of $T_L$ is geometrically connected.
Then
\begin{alphabetize}
\item  There is a canonical  $L/K$ descent datum on
\begin{equation}
\label{E:unionAtwist}
\coprod_{V \in \Pi_0(T_L)} A^{(V, t_0\rest V, Z\rest V)};
\end{equation}
let $A^{(T,t_0,Z)}$ be the corresponding model over $K$.
\item The $L$-morphism
\begin{equation}
\label{E:unionpsitwist}
\xymatrixcolsep{5pc}\xymatrix{
T_L = \displaystyle{\coprod_{V \in \Pi_0(T_L)}} V \ar[r]^-{\psi_{(V, t_0\rest V, Z\rest V)}} & 
\displaystyle{\coprod_{V \in \Pi_0(T_L)}} A^{(V, t_0\rest V, Z\rest V)}}
\end{equation}
descends to a $K$-morphism
\[\xymatrix{
\psi_{(T,t_0,Z)} : T \ar[r] & A^{(T,t_0,Z)}.
}\]
\item If $t_1 \in T(\bar K)$ is a second geometric point, then there is a $K$-isomorphism $\epsilon_{t_0,t_1}: A^{(T,t_0,Z)} \stackrel\sim\to A^{(T,t_1,Z)}$ which makes the following diagram commute:
  \[
    \xymatrix@R-0.5pc{
      & A^{(T,t_0,Z)}\ar[dd]^{\epsilon_{t_0,t_1}}_\sim \\
    T \ar[ur]^{\psi_{(T,t_0,Z)}} \ar[dr]_{\psi_{(T,t_1,Z)}}\\
    & A^{(T,t_1,Z)}
    }
  \]
\end{alphabetize}
\end{pro}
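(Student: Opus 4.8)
The plan is to prove (a) and (b) together by Galois descent from $\bar K$, running the proof of \Cref{L:deftorsor} one irreducible component at a time and keeping track of how $\gal(K)$ permutes the components. The key observation is that, since $T$ is connected over $K$, $\gal(K) = \gal(\bar K/K)$ acts transitively on the finite set $\Pi_0(T_{\bar K})$, while the hypothesis that every irreducible component of $T_L$ is geometrically connected says precisely that $\gal(\bar K/L)$ acts trivially on $\Pi_0(T_{\bar K})$. Hence the base-change map $\Pi_0(T_L) \to \Pi_0(T_{\bar K})$ is a $\gal(K)$-equivariant bijection, and I identify the two sets, writing $\mathcal V$ for the resulting finite $\gal(K)$-set, $V \mapsto V^\sigma$ for the action of $\sigma \in \gal(K)$ (consistently with the conventions of \Cref{L:LKpullback}: $V^\sigma$ is the component of $T_{L^\sigma}$ whose base change $\widetilde{V^\sigma}$ to $\bar K$ is the image of $\widetilde V$ under $\sigma$). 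For each $V$ I fix a $\bar K$-point of $V$, taken to be $t_0$ on the component through $t_0$; by \Cref{L:deftorsor}(b) everything below is independent, up to canonical isomorphism, of these auxiliary choices, and this is the sense in which the output is canonical. I keep writing $t_0\rest V$ for the chosen point of $V$, so that $Z\rest V$ and $t_0\rest V$ are as in the statement.

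Each $A^{(V, t_0\rest V, Z\rest V)}$ is a torsor under the abelian variety $A_L$, hence a projective $L$-scheme, so $Y := \coprod_{V} A^{(V, t_0\rest V, Z\rest V)}$ is projective over $L$. For part (a) it therefore suffices to equip the $\bar K$-scheme $Y_{\bar K} = \coprod_{V} A_{\bar K}$ with a continuous semilinear $\gal(K)$-action extending the $\gal(\bar K/L)$-descent datum already carried by the $L$-scheme $Y$ (equivalently, to produce an $L/K$-descent datum on $Y$): the action will factor through $\gal(L'/K)$ for some finite Galois $L'/K$, and quasi-projective Galois descent is effective, yielding the $K$-model $A^{(T,t_0,Z)}$ with $A^{(T,t_0,Z)}_L \cong Y$. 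I define the action by the evident common generalization of the twisted action \eqref{E:twistedgalois}: for $\sigma \in \gal(K)$, let $f_\sigma$ carry the $V$-summand of $Y_{\bar K}$ to the $V^\sigma$-summand by $b \mapsto \gamma^{(V,\sigma)} +_{A(\bar K)} b^\sigma$, where
\[
\gamma^{(V,\sigma)} := \phi\bigl(\til{(Z\rest{V^\sigma})}_{(t_0\rest V)^\sigma,\; t_0\rest{V^\sigma}}\bigr) \in A(\bar K),
\]
which makes sense because $(t_0\rest V)^\sigma$ is a $\bar K$-point of $V^\sigma$. The verifications are exactly those in the proof of \Cref{L:deftorsor}, now carried out between summands: $f_\sigma$ is semilinear over $\spec\bar K$ (clear); for $\sigma \in \gal(\bar K/L)$ one has $V^\sigma = V$ and $\gamma^{(V,\sigma)} = \phi(\til{(Z\rest V)}_{(t_0\rest V)^\sigma,\, t_0\rest V})$, which is the cocycle $\alpha(V, t_0\rest V, Z\rest V)$ defining the twisted $\gal(\bar K/L)$-action on $A^{(V, t_0\rest V, Z\rest V)}$, so $f_\sigma$ restricts to the given $L$-descent datum; continuity of $\sigma \mapsto f_\sigma$ is clear as in \Cref{L:deftorsor}; and the cocycle identity for $\{f_\sigma\}$ reduces — by the $\gal(K)$-equivariance of $\phi$, the identity $(Z\rest V)^\sigma = Z\rest{V^\sigma}$ (because $Z$ is defined over $K$), and the telescoping relation $\til Z_{a,b} + \til Z_{b,c} = \til Z_{a,c}$ on fibers from the proof of \Cref{L:deftorsor} — to the corresponding identity among the $\gamma^{(V,\sigma)}$, which says exactly that they form a cocycle simultaneously permuting the summands according to $V \mapsto V^\sigma$. (More invariantly, this is the descent datum assembled componentwise from the isomorphisms of \Cref{L:LKpullback}(a) after reindexing $V \mapsto V^\sigma$.)

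For (b), over $\bar K$ I assemble the component morphisms $\psi_{(V, t_0\rest V, Z\rest V)_{\bar K}}\colon \widetilde V \to A_{\bar K}$ of \Cref{L:deftorsor}(a) into a single morphism $\Psi\colon T_{\bar K} = \coprod_V \widetilde V \to Y_{\bar K} = \coprod_V A_{\bar K}$, whose base change to $L$ is the morphism \eqref{E:unionpsitwist}. It remains to check that $\Psi$ is equivariant for the natural $\gal(K)$-action on $T_{\bar K}$ and the action $\{f_\sigma\}$ on $Y_{\bar K}$: evaluating $\Psi(t^\sigma)$ and $f_\sigma(\Psi(t))$ for $t \in \widetilde V(\bar K)$, this is once more the computation $\psi(t)^{\til\sigma} = \psi(t^\sigma)$ from the proof of \Cref{L:deftorsor}(a), now relating the $V$- and $V^\sigma$-summands and using $(Z\rest V)^\sigma = Z\rest{V^\sigma}$ (equivalently, it is \Cref{L:LKpullback}(b) applied componentwise). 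Hence $\Psi$ descends to the required $K$-morphism $\psi_{(T,t_0,Z)}\colon T \to A^{(T,t_0,Z)}$, and its base change to $L$ is \eqref{E:unionpsitwist}. Finally, a different choice of the auxiliary base points yields a canonically isomorphic pair $(A^{(T,t_0,Z)}, \psi_{(T,t_0,Z)})$, again by the argument of \Cref{L:deftorsor}(b) componentwise.

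The only genuine difficulty is bookkeeping: one must correctly interlock the $\gal(K)$-action on the index set $\mathcal V$ (coming from the geometry of $T$) with the internal torsor-twists on the summands (coming from $\phi$ and $Z$), and check that the combined data satisfies the cocycle condition — this is the relation among the $\gamma^{(V,\sigma)}$ above, and it is the same telescoping-plus-equivariance computation as in \Cref{L:deftorsor}. Everything else — extracting from ``$T_L$ has geometrically connected irreducible components'' the clean statement that $\gal(\bar K/L)$ fixes $\Pi_0(T_{\bar K})$ pointwise, effectivity of quasi-projective descent, continuity, and independence of the auxiliary base points — is standard or immediate from \Cref{L:deftorsor} and \Cref{L:LKpullback}.
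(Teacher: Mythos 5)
Your argument is correct, and it takes a mildly different route from the paper's. You never single out a distinguished component: instead you put an explicit semilinear $\gal(K)$-action on all of $\coprod_V A^{(V,t_0\rest V,Z\rest V)}$ after base change to $\bar K$, with twisting elements $\gamma^{(V,\sigma)}=\phi(Z_{(t_0\rest V)^\sigma}-Z_{t_0\rest{V^\sigma}})$ permuting the summands, verify the mixed permutation-plus-translation cocycle identity by the same telescoping-and-equivariance computation as in \Cref{L:deftorsor}, check compatibility with the given $\gal(\bar K/L)$-structure, and invoke effectivity of Galois descent for (quasi-)projective schemes; equivariance of the assembled morphism then gives (b). The paper instead fixes one component $W$, uses transitivity of $\gal(K)$ on $\Pi_0(T_L)$ together with \Cref{L:LKpullback} to rewrite \eqref{E:unionAtwist} as $\coprod_{\tau\in R}(A^{(W,w_0,Y)})^\tau$, and descent is then immediate because a Galois-stable union of conjugates of a single scheme is the base change of an induced $K$-scheme; no cocycle needs to be written and effectivity is never invoked explicitly. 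Your version costs the explicit cocycle verification and the appeal to effectivity (hence the remark that the union is projective over $L$), but it buys a completely explicit descent datum, shows that transitivity is not actually needed, and sidesteps the identification of components of $T_{L^\tau}$ with components of $T_L$ that the paper's phrasing quietly uses when $L/K$ is not Galois; your handling of the auxiliary choices of points $t_0\rest V$ over $t_0$, with independence via \Cref{L:deftorsor}(b), makes explicit what the paper leaves implicit.
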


\begin{proof}
Since $T$ is smooth and connected, it is irreducible.  Pulling $t_0$ back along $T_L \to T$ gives a $\bar K$-point on each irreducible component of $T_L$.  This point is not unique, and we fix one such choice $t_0\rest V$ for each component $V$.  Fix some irreducible component $W$ of $T_L$, and let $w_0 = t_0\rest W$.  Similarly, let $Y \in \chow^i(W\times X_L)$ be the restriction of $Z$ to $W\times X_L$.  

The Galois group $\gal(K)$ acts transitively on the set of irreducible components of $T_L$.  Let $H$ be the stabilizer of $W$ in $\gal(K)$, and let $R \subseteq \gal(K)$ be a system of representatives for the set of cosets $H\backslash \gal(K)$.  Then
\[
\Pi_0(T_L) = \st{ W^\tau: \tau \in R},
\]
and by \Cref{L:LKpullback}(a), the object defined in \eqref{E:unionAtwist} becomes
\[
\coprod_{\tau \in R} A^{(W^\tau, w_0^\tau, Y^\tau)} \iso \coprod_{\tau \in R} (A^{(W, w_0, Y)})^\tau,
\]
which descends to $K$.  Indeed, suppose $\sigma \in \gal(K)$.  Then multiplication by $\sigma$ defines a permutation of $R$, and thus an automorphism  $\tilde \sigma$ of $\coprod_{\tau\in R} (A^{(W,w_0,Y)})^\tau$ over $\sigma^*: \spec(\bar K) \to \spec(\bar K)$. Because $((W^\tau)^{\tilde \sigma})^{\tilde \rho} = (W^\tau)^{\widetilde{\sigma\rho}}$, this defines a descent datum on $\coprod_{\tau \in R} A^{(W^\tau,w_0^\tau, Y^\tau)}$; call the resulting scheme $A^{(T,t_0,Z)}$.

Similarly, let $\psi = \psi_{(W, w_0, Y)}$. Thanks to \Cref{L:LKpullback}(b), the morphism \eqref{E:unionpsitwist} is actually
\[
\xymatrix{
\coprod_{\tau \in R} W^\tau \ar[r]^-{\coprod_{\tau} \psi^\tau} & \coprod_{\tau \in R} (A^{(W,w_0,Y)})^\tau.
}
\]
The same argument we used to descend $A^{(T,t_0,Z)}$, when applied to the graph of $\coprod \psi^\tau$, shows that this morphism descends to $K$.

Finally, consider a second geometric point $t_1$ of $T$ and let $w_1 = t_1\rest W$.  Once again, because $(\epsilon_{w_0,w_1})^\tau = \epsilon_{w_0^\tau, w_1^\tau}$ (\Cref{L:LKpullback}(c)), $\coprod \epsilon_{w_0,w_1}^\tau$ descends to $K$.
\end{proof}

\begin{rem}
  In the setting of \Cref{L:geomdisconnected}, by construction, we have an isomorphism of \'etale $K$-schemes $\underline{\pi}_0(T) \iso \underline\pi_0(A^{(T,t_0,Z)})$.
\end{rem}

\subsection{Symmetric products}
\label{S:symmetric}

If $C/K$ is a smooth projective curve and $d$ is a positive integer, then there is a well-known Abel
map $\sym^{(d)}C \to \pic^d_{C/K}$.  We will see
(\Cref{C:chenrequest}) that this is a special case of the following
observation.

\begin{pro}
  \label{L:symmetric}
Let $T/K$ be a smooth geometrically connected scheme, suppose $Z \in \chow^i(T\times X)$, and let $t_0 \in T(\bar K)$.  Let $Z^{\tplus d} = Z\tplus Z \tplus \dots \tplus Z \in \chow^i(T^{\times d} \times X)$. Then

\begin{alphabetize}

\item the isomorphism class of the torsor $A^{(T^{\times d}, t_0^{\times d}, Z^{\tplus d})}$ is equal to the $d$-fold sum $d[A^{(T,t_0,Z)}] = [A^{(T,t_0,Z)}] + \dots + [A^{(T,t_0,Z)}]$; and
\item $\psi_{Z^{\tplus d}}$  factors through the symmetric product $\sym^{(d)}(T)$. In other words, there is a diagram of $K$-schemes 
\[
\xymatrix{
T^{\times d} = T\times \dots \times T \ar[rr]^{\psi_{Z^{\tplus d}}} \ar[rd]_{s_d} && \ \ A^{(T^{\times d},t_0^{\times d},Z^{\tplus d})} \\
& \sym^{(d)}(T) \ar@{-->}[ur]_{\psi_Z^{(d)}}
}\]
\end{alphabetize}
\end{pro}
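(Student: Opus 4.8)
The plan is to obtain part (a) directly from \Cref{L:deftorsor}(c), and to prove part (b) by checking that $\psi_{Z^{\boxplus d}}$ is invariant under the natural action of the symmetric group $S_d$ on $T^{\times d}$ permuting the factors, so that it descends to the quotient $\sym^{(d)}(T) = T^{\times d}/S_d$.

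For (a): under the canonical identification $T^{\times d} = T^{\times(d-1)}\times T$, the definition of $\boxplus$ gives $Z^{\boxplus d} = Z^{\boxplus(d-1)}\boxplus Z$ in $\chow^i(T^{\times d}\times X)$ and $t_0^{\times d} = t_0^{\times(d-1)}\times t_0$. A single application of \Cref{L:deftorsor}(c) yields
\[
[A^{(T^{\times d}, t_0^{\times d}, Z^{\boxplus d})}] = [A^{(T^{\times(d-1)}, t_0^{\times(d-1)}, Z^{\boxplus(d-1)})}] + [A^{(T,t_0,Z)}]
\]
in $\wc(A/K)$, and an induction on $d$ gives $d\,[A^{(T,t_0,Z)}]$.

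For (b): fix $\sigma\in S_d$, viewed as an automorphism of $T^{\times d}$ over $K$. Since $Z^{\boxplus d}$ is by construction invariant under the induced action of $\sigma$ on $\chow^i(T^{\times d}\times X)$ (which permutes the $d$ pullbacks whose sum it is), and since the base point $t_0^{\times d}$ is fixed by $\sigma$, I claim that
\[
\psi_{Z^{\boxplus d}}\circ\sigma = \psi_{Z^{\boxplus d}}\colon T^{\times d}\longrightarrow A^{(T^{\times d}, t_0^{\times d}, Z^{\boxplus d})}.
\]
These are two $K$-morphisms into the same (separated) target, so it suffices to verify the equality after base change to $\bar K$; and since $T$ is smooth over $K$, the scheme $T^{\times d}_{\bar K}$ is reduced, so it suffices to verify it on $\bar K$-points. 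There both morphisms send $(t_1,\dots,t_d)$ to $\sum_{j=1}^{d}\phi(Z_{t_j}-Z_{t_0})$, which is symmetric in $t_1,\dots,t_d$; one can also phrase this as the functoriality of $\psi$ in the pointed scheme applied to $\sigma$.

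To conclude, recall that the quotient morphism $s_d\colon T^{\times d}\to \sym^{(d)}(T)$ is a categorical quotient of $T^{\times d}$ by the finite group $S_d$ in the category of schemes. Hence the $S_d$-invariant $K$-morphism $\psi_{Z^{\boxplus d}}$ factors uniquely as $\psi_{Z^{\boxplus d}} = \psi_Z^{(d)}\circ s_d$ for a $K$-morphism $\psi_Z^{(d)}\colon \sym^{(d)}(T)\to A^{(T^{\times d}, t_0^{\times d}, Z^{\boxplus d})}$, which is precisely the asserted diagram. (One may instead first produce the factorization over $\bar K$---using that the formation of symmetric products commutes with the field extension $K\subseteq\bar K$---and then descend $\psi_Z^{(d)}$ to $K$ along the $\gal(K)$-equivariant surjection $s_d$, as in the proof of \Cref{L:deftorsor}.) There is no serious obstacle here: the one point that demands a moment's attention, and the only place where smoothness of $T$ genuinely enters, is the reduction of the $S_d$-invariance to a symmetry visible on $\bar K$-points; granting that, together with the existence of $\sym^{(d)}(T)$ as a scheme and its categorical-quotient property (e.g. when $T$ is quasi-projective), the rest is formal.
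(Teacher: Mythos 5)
Your proof is correct. Part (a) matches the paper exactly (the paper simply calls it a special case of \Cref{L:deftorsor}(c); your induction makes the implicit iteration explicit, which is fine since $T^{\times(d-1)}$ is again smooth and geometrically connected and $Z^{\boxplus d}=Z^{\boxplus(d-1)}\boxplus Z$). For part (b) your route differs slightly from the paper's: the paper first factors $\psi_{Z^{\boxplus d}}$ through $\sym^{(d)}(T)_{\bar K}$ after base change to $\bar K$ (where the factorization is taken as clear), and then descends the resulting map $\psi^{(d)}$ to $K$ using that $s_d$ is surjective on $\bar K$-points and that $s_d$ and $\psi_{Z^{\boxplus d}}$ are $\gal(K)$-equivariant on $\bar K$-points. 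You instead work directly over $K$: you prove $S_d$-invariance of the $K$-morphism $\psi_{Z^{\boxplus d}}$ (reducing to $\bar K$-points via reducedness of the smooth source and separatedness of the torsor --- strictly you should also note that $\bar K$-points are Zariski dense in $T^{\times d}_{\bar K}$, which again follows from smoothness over the separably closed field $\bar K$), and then invoke that $s_d\colon T^{\times d}\to\sym^{(d)}(T)$ is a categorical quotient by $S_d$ in schemes. This buys you a cleaner statement with no Galois-descent step for the factored morphism, at the cost of citing the categorical-quotient property of $\sym^{(d)}(T)$ over $K$ (available e.g.\ for $T$ quasi-projective, which the existence of $\sym^{(d)}(T)$ in the statement already presupposes); the paper's version only needs the quotient over $\bar K$ plus the same Galois-equivariance bookkeeping it has already set up in the proof of \Cref{L:deftorsor}. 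Either way the substance --- symmetry of $(t_1,\dots,t_d)\mapsto\sum_j\phi(Z_{t_j}-Z_{t_0})$ --- is identical, and your argument is sound.
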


\begin{proof}
  Part (a) is  a special case of \Cref{T:main}(b).  For part
  (b) since,  on $\bar K$-points, the fibers of $Z^{\tplus d}$ are constant on the fibers of the quotient map $T_{\bar K}^{\times d} \to \sym^{(d)}(T)_{\bar K}$, 
$\psi_{Z^{\tplus d}, \bar K}$ factors through some morphism $\psi^{(d)}_Z: \sym^{(d)}(T)_{\bar K} \to A_{\bar K}$. Now, $s_d$ is surjective on $\bar K$-points, and both $s_d$ and $\psi_{Z^{\tplus d}}$ are $\gal(K)$-equivariant on $\bar K$-points.
  It follows that $\psi^{(d)}$ is $\gal(K)$-equivariant, too, and thus descends to $K$.
\end{proof}

Suppose that $\dim T = 1$.  Then $Z^{\tplus d}$ descends to a cycle
class we abusively denote $\sym^{(d)}(Z)$ on $\sym^{(d)}(T)\times X$,
and by checking on $\bar K$-points we see that $\psi_Z^{(d)} =
\psi_{\sym^{(d)}(T), \sym^{(d)}(t_0), \sym^{(d)}(Z)}$.  (If $d \geq 2$ and $\dim
T>1$, then $\sym^{(d)}(T)$ is no longer smooth, and thus not a
suitable parameter space for cycle classes; but if one is willing to
work with the complement of the fat diagonal in $T^{\times d}$, and
choose a new base point accordingly, then a similar statement holds.)

As an example, now suppose that $X/K$ has dimension $n$.  The Albanese
torsor of $X$ is a torsor $\alb_{X/K}^{(1)}$ under an abelian variety
$\alb_{X/K}$ which is equipped with a morphism $\albmorphism: X \to
\alb_{X/K}^{(1)}$ which is initial for morphisms from $X$ to torsors
under abelian varieties.  (See \cite{ACMValb} for a recent treatment
of these objects and their history.)  The generalized difference map $\A^n(X_{\bar K}) \to \alb_{X/K}(\bar K)$ given by $\sum n_i P_i \mapsto \sum n_i \albmorphism(P_i)$ is a regular homomorphism,
and $\alb_{X/K}$ is actually the algebraic representative of $X$ in codimension
$n$.  (If $K$ is
algebraically closed, this observation is due to Murre \cite[\S 1.4
  and 1.8]{murre83}; for arbitrary $K$, see
\cite[Lem.~7.6]{ACMVfunctor}.)

The diagonal cycle $\Delta_X \in \chow^n(X\times X)$ determines a
torsor $\alb_{X/K}^{(X,\Delta_X)}$ under $\Ab^n_{X/K} \iso \alb_{X/K}$.
Choose a geometric base point $P \in X(\bar K)$.  By the universal
property of the Albanese torsor, $\psi_{(X,P, \Delta_X)}$ factors as
\[
\xymatrix{
&& \alb_{X/K}^{(1)}\ar@{-->}[d]^{\iota} \\
 X \ar[urr]^{\albmorphism} \ar[rr]^{\psi_{(X,P,\Delta_X)}} &&
 \alb_{X/K}^{(X,\Delta_X)}
}
\]
In fact, $\iota$ is the identity morphism.  It suffices to verify this
after base extension to a field of definition of $P$, at which point
the claim is contained in \cite[Lem.~7.6]{ACMVfunctor}.

For a positive integer $d$, let $\Delta_X^{(d)}$ denote the $d$-fold sum $\Delta_X \tplus \dots
\tplus \Delta_X$.  Then the isomorphism class of $\alb_{X/K}^{(d)} \coloneqq
\alb_{X/K}^{\Delta_X^{(d)}}$ is the same as the $d$-fold sum
$[\alb_{X/K}^{(1)}] + \dots + [\alb_{X/K}^{(1)}]$ (\cref{L:symmetric}(a)), and in this special case \cref{L:symmetric}(b)
is the assertion:

\begin{cor}
  \label{C:chenrequest}
  The morphism  $\psi_{\Delta_X^{(d)}}$ factors through the symmetric product; there is a diagram of $K$-varieties
\begin{equation}
  \label{E:descendtosym}
    \xymatrix{ X \times \dots \times X  \ar[rd]_{s_d} \ar[rr]^{\psi_{\Delta_X^{(d)}}} & &\alb^{(d)}_{X/K} \\
      & \sym^{(d)}(X) \ar[ru]_{\psi^{(d)}} &}
  \end{equation}
\end{cor}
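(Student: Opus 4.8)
The plan is to obtain Corollary~\ref{C:chenrequest} as the special case of Proposition~\ref{L:symmetric} in which the parametrizing scheme and the ambient variety coincide. Concretely, I would take $T = X$, take $\phi\colon \A^n(X_{\bar K}) \to \alb_{X/K}(\bar K)$ to be the regular homomorphism exhibiting the Albanese as the algebraic representative in codimension $n = \dim X$, and take $Z = \Delta_X \in \chow^n(X\times X)$ (note $\Delta_X$ indeed has codimension $n$ in $X \times X$). With these choices one has, essentially by definition, $Z^{\boxplus d} = \Delta_X^{(d)}$ in $\chow^n(X^{\times d}\times X)$, and $A^{(X^{\times d},\, t_0^{\times d},\, \Delta_X^{(d)})} = \alb^{(d)}_{X/K}$, and $\psi_{Z^{\boxplus d}} = \psi_{\Delta_X^{(d)}}$.

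First I would record that the hypotheses of Proposition~\ref{L:symmetric} are met: $X$ is smooth by assumption and geometrically connected (which is implicit in the use here of the Albanese as an algebraic representative). Part~(a) of that proposition then gives $[\alb^{(d)}_{X/K}] = d\,[\alb^{(1)}_{X/K}]$ in $\wc(\alb_{X/K}/K)$ --- already noted in the text preceding the corollary via Theorem~\ref{L:deftorsor}(c) --- and part~(b) produces a $K$-morphism $\psi^{(d)}\colon \operatorname{Sym}^{(d)}(X) \to \alb^{(d)}_{X/K}$ with $\psi^{(d)} \circ s_d = \psi_{\Delta_X^{(d)}}$, which is precisely diagram~\eqref{E:descendtosym}. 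The suppression of the base point in the corollary's notation is harmless, since by Theorem~\ref{L:deftorsor}(b) the isomorphism class of the target torsor is independent of the choice of $t_0 \in X(\bar K)$.

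There is essentially no obstacle: the only points requiring care are notational bookkeeping --- matching the symbols $\Delta_X^{(d)}$ and $\alb^{(d)}_{X/K}$ introduced just before the corollary with the symbols $Z^{\boxplus d}$ and $A^{(T^{\times d},\dots)}$ of Proposition~\ref{L:symmetric} --- and checking that $\operatorname{Sym}^{(d)}(X)$ is used in the same sense in both places. If one preferred a self-contained argument to a citation, the content is exactly the last paragraph of the proof of Proposition~\ref{L:symmetric}: over $\bar K$ the cycle $\Delta_X^{(d)}$ is symmetric under the $S_d$-action permuting the factors of $X^{\times d}$, so $\psi_{\Delta_X^{(d)},\,\bar K}$ is $S_d$-invariant and hence factors through $s_{d,\bar K}$; and since $s_d$ is surjective on $\bar K$-points and both $s_d$ and $\psi_{\Delta_X^{(d)}}$ are $\gal(K)$-equivariant on $\bar K$-points, the induced map descends to a $K$-morphism $\psi^{(d)}$.
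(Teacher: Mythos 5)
Your proposal is correct and is essentially the paper's own argument: the paper deduces \Cref{C:chenrequest} precisely as the special case of \Cref{L:symmetric} with $T = X$, $Z = \Delta_X$, and $\phi$ the Albanese regular homomorphism in codimension $n = \dim X$, and your self-contained fallback reproduces the descent step (surjectivity of $s_d$ on $\bar K$-points plus $\gal(K)$-equivariance) from the proof of that proposition. No gaps.
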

If $X$ is a curve, then $\psi_{\Delta_X}$ is the Abel map, 
$\alb^{(1)}_{X/K} \iso \pic^1_{X/K}$, and more generally
$\alb^{(d)}_{X/K} \iso \pic^d_{X/K}$. \Cref{C:chenrequest} also appears as (the
existence part of) \cite[Cor.~3.2]{hassetttschinkel21}.  In contrast
to the argument given there, after the initial identification of
$\alb_{X/K}^{(1)}$ with $\Ab^n_{X/K}$, our proof is a simple
consequence of the theory of
regular homomorphisms.

\section{Application to rationality and cycles}
\label{S:rationality}

As we noted in the introduction,  torsors under algebraic intermediate
Jacobians (and algebraic representatives) have been used to study the
rationality of certain geometrically rational threefolds.   In this
section, we compare the torsors constructed here to those coming from
the Benoist--Wittenberg Chow scheme $\bwchow^2_{X/K}$, and explore the
extent to which they serve as a replacement in known rationality arguments.

\subsection{A framework for an obstruction to the existence of cycles defined over $K$}\label{S:BigAb}

It is well-known that for a smooth projective curve $C$ over a field
$K$,  the schemes $\pic^d_{C/K}$ provide obstructions to
the existence of effective $0$-cycles of degree $d$.  More precisely, consider
the Abel map 
$$\xymatrix{
\psi^{(d)} : \sym^{(d)}(C)\ar[r]& \pic^d_{C/K}}
$$
(\Cref{C:chenrequest}).
After base change to the separable closure, this is given by $D\mapsto \mathcal O_{C_{\bar K}}(D)$.  
The scheme $\pic^d_{C/K}$ is a torsor under $\pic^0_{C/K}$, and its class $[\pic^d_{C/K}]$ in the Weil--Ch\^atelet group $\wc(\pic^0_{C/K})$  provides an obstruction to the existence of an effective $0$-cycle of
degree $d$ on $C$, defined over~$K$.  Indeed, the existence of such a
cycle implies that $\pic^d_{C/K}$ admits a $K$-point
(namely, the image under $\psi^{(d)}$ of the $K$-point in $\sym^{(d)}(C)$ corresponding to the $0$-cycle), which in turn implies that $[\pic^d_{C/K}]=[\pic^0_{C/K}] = 0\in \wc(\pic^0_{C/K})$.   Taking a broader view, the subgroup $[\pic_{C/K}]$ of $\wc(\pic^0_{C/K})$ generated by the isomorphism classes of all the torsors $\pic^d_{C/K}$ provides a group that is an obstruction to the existence of $0$-cycles on $C$ defined over $K$.  

Our \Cref{T:main} allows one to construct similar obstructions for cycles of any dimension on any smooth projective variety over~$K$.  
For instance,  given a regular homomorphism $\phi\colon \A^i(X_{\bar
  K}) \to A(\bar K)$ over $K$ and any family of cycle classes $Z\in
\chow^i(T\times X)$  parameterized by a smooth variety $T$ over $K$,  
the class $[A^{(T,Z)}]$ in $\wc(A)$, if nontrivial, is an obstruction to the existence of a $K$-point of $T$.
   Moreover,  if the $K$-morphism $T \to A^{(T,Z)}$ from \Cref{T:main} is an
  isomorphism, then $T$ has a $K$-point if and only if $[A^{(T,Z)}]=0$
  in $\wc(A/K)$.  This is the obstruction used in the applications below. 
  
 Bearing this in mind, consider now  the  algebraic representative $\phi:\A^i(X_{\bar K}) \to \Ab^i_{X/K}(\bar K)$, if it exists (e.g., $i=1,2,\dim X$).  The subgroup  of $\wc(\Ab^i_{X/K})$ generated by the isomorphism classes of the torsors $\Ab^{i,(T,Z)}_{X/K}$, as $(T,Z)$ ranges over all families of codimension-$i$ cycle classes on $X$ (as in the first paragraph), provides a group that  can be viewed as an obstruction to the existence of certain cycles on $X$ defined over $K$.

\subsection{Comparison with the Chow scheme}

We now introduce a more restricted setting where it is possible to compare the present method with that of Benoist--Wittenberg.  Let $X/K$ be a smooth projective geometrically rational threefold over a field.  Benoist and Wittenberg are able to construct
  a codimension-2 Chow scheme $\bwchow^2_{X/K}$
 \cite[Thm.~3.1]{benoistwittenberg23}.    Since
 $\bwchow^2_{X/K}$ is a group scheme, any geometrically irreducible
 component $\bwchow^{2,\xi}_{X/K}$ is naturally a torsor over $K$
 under the connected component of the identity $\bwchow^{2,\circ}_{X/K}$.  Moreover, the group structure on
 $\bwchow^2_{X/K}$, and thus on its group of connected components,
 implies the equality $[\bwchow^{2,\xi}_{X/K}] +
 [\bwchow^{2,\xi'}_{X/K}] = [\bwchow^{2,\xi+\xi'}_{X/K}]$ in
 $\wc(\bwchow^{2,\circ}_{X/K})$.

 We now suppose that $K$ is perfect; then $\bwchow^{2,\circ}_{X/K}$ is
 canonically isomorphic to $\Ab^2_{X/K}$, and thus geometrically
 irreducible components of $\bwchow^2_{X/K}$ are torsors under the
 algebraic representative.

To compare the torsors which arise, suppose $T/K$ is smooth and
geometrically integral, and that $Z \in \chow^2(T\times X)$.  Fix a
geometric point $t_0 \in T(\bar K)$.  On one hand, \Cref{T:main}
defines a torsor $\Ab^{2,(T,Z)}_{X/K}$ under $\Ab^2_{X/K}$, and a
$K$-rational morphism $\psi_{(T,t_0,Z)}: T \to \Ab^{2,(T,Z)}_{X/K}$.
On the other hand, the image of the classifying map $\chi_{(T,Z)}:T \to
\bwchow^2_{X/K}$ is supported in some geometrically irreducible
component $\bwchow^{2,(T,Z)}_{X/K}$, also a torsor under
$\Ab^2_{X/K}$.

\begin{lem}
\label{L:comparison}
Let $X/K$ be a smooth projective geometrically rational threefold over a perfect field.
Let $T/K$ be a smooth and geometrically integral scheme, $Z \in \chow^2(T\times X)$, and $t_0 \in T(\bar K)$.  Then there is an isomorphism $\iota$ which makes the following diagram commute:
\[
\xymatrix@R-1pc{
 & \Ab^{2,(T,Z)}_{X/K} \ar[dd]^\iota_\sim \\
T \ar[ur]^{\psi_{(T,t_0,Z)}} \ar[dr]_{\chi_{(T,Z)}} \\
& \bwchow^{2,(T,Z)}_{X/K}}
\]
In particular, $\Ab^{2,(T,Z)}_{X/K}$ and $\bwchow^{2,(T,Z)}_{X/K}$ are isomorphic.
\end{lem}

\begin{proof}
Initially, we work over $\bar K$.  Let
 $Z' = Z_{\bar K} - T_{\bar K} \times Z_{t_0}\in \chow^2(T_{\bar K} \times X_{\bar K})$, and note that all fibers of $Z'$ over $T_{\bar K}$ are algebraically trivial.  We claim there is a commutative diagram
\[
\xymatrix{
 &(\Ab^{2,(T,Z)}_{X/K})_{\bar K} \ar[r]^\sim & (\Ab^2_{X/K})_{\bar K} \ar[dd]^\sim\\
T_{\bar K} \ar[urr]_{\psi_{(T,t_0,Z')}} \ar[ur]^{\psi_{(T,t_0,Z), \bar K}}
\ar[dr] _{\chi_{(T,Z),\bar K}}
\ar[drr]^{\chi_{(T,Z')}} \\
& (\bwchow_{X/K}^{2,(T,Z)})_{\bar K} \ar[r]_\sim & (\bwchow_{X/K}^{2,\circ})_{\bar K}
}\]
where the top horizontal isomorphism is the trivialization $\psi_{(T,t_0,Z)}(t_0)\mapsto 0_{\Ab^2_{X/K}, \bar K}$; the bottom horizontal isomorphism is translation by $\chi_{(T,Z)}(t_0) \in \bwchow^2_{X/K}(\bar K)$; and the vertical isomorphism is supplied by \cite[Thm.~3.1(vi)]{benoistwittenberg23}.  Indeed, because $Z'$ has algebraically trivial fibers, the outer triangle commutes \cite[Thm.~3.1(iv)]{benoistwittenberg23}.  The commutativity of the top triangle is verified by looking at $\bar K$-points, while the commutativity of the bottom triangle follows from the definition of the group functor which $\bwchow^2_{X/K}$ represents.

Consequently, there is an isomorphism $\iota_{\bar K}$ which makes the following diagram of $\bar K$-schemes commute: 
\begin{equation}
\label{E:iotaKbar}
\xymatrix@R-1pc{
 &(\Ab^{2,(T,Z)}_{X/K})_{\bar K}\ar[dd]^\sim \\
T_{\bar K}\ar[ur]^{\psi_{(T,t_0,Z), \bar K}}
\ar[dr] _{\chi_{(T,Z),\bar K}}\\
& (\bwchow_{X/K}^{2,(T,Z)})_{\bar K} 
}\end{equation}
Possibly after replacing $T$ with a larger parameter space, we may and
will assume $\psi_{(T,t_0,Z)}$ (and thus $\chi_{(T,Z)}$) surjective.
To see that this is possible, let $M\in \chow^2(\Ab^2_{X/K}\times X)$ be
 a miniversal cycle, i.e., a family of
algebraically trivial cycle classes on $X$ parametrized by
$\Ab^2_{X/K}$ such that the induced morphism $\psi_{(\Ab^2_{X/K},
  0_{\Ab^2_{X/K}}, M)}: \Ab^2_{X/K} \to \Ab^2_{X/K}$ is an isogeny \cite[Lem.~4.7]{ACMVfunctor}.  In particular, $\psi_{(\Ab^2_{X/K},0_{\Ab^2_{X/K}},M)}$ is surjective, and the fiber $M_{0_{\Ab^2_{X/K}}}$ is the trivial cycle class. Now let $T'' = T\times \Ab^2_{X/K}$, and let $Z'' = Z \tplus M$.  Identify $T$ with $T\times \st{0_{\Ab^2_{X/K}}}$ as a closed subscheme of $T''$; then $\psi_{(T'', t_0 \times 0_{\Ab^2_{X/K}}, Z'')}$ is surjective, and its restriction to $T$ is $\psi_{(T,t_0,Z)}$.  This replacement does not change the torsor.  Indeed, the cocycle of $(\Ab^2_{X/K}, 0_{\Ab^2_{X/K}}, M)$ is trivial, because $0_{\Ab^2_{X/K}}$ is a $K$-point.  We relabel $(T'', t_0 \times 0_{\Ab^2_{X/K}}, Z'')$ as $(T, t_0, Z)$ and proceed.

Using the universal property of the Albanese torsor of $T$, the morphisms $\psi_{(T,t_0,Z)}$ and $\chi_{(T,Z)}$ factor through $\alb^{(1)}_{T/K}$.  (Here $T$ need not be proper; see \cite{ACMValb} for the Albanese torsor of an incomplete variety.)  In particular, we have a surjection of torsors $\alb^{(1)}_{T/K} \to \Ab^{2,(T,Z)}_{X/K}$ over a surjection of abelian varieties $\bar\psi: \alb_{T/K} \to \Ab^2_{X/K}$, and a surjection of torsors $\alb^{(1)}_{T/K} \to \bwchow^{2,(T,Z)}_{X/K}$ over a surjection of abelian varieties $\bar\chi: \alb_{T/K} \to \bwchow^{2,\circ}_{X/K}$.  Let $G = \ker \bar\psi$ and $H = \ker\bar\chi$.  Then our torsors may be constructed as quotients
\[
\Ab^{2,(T,Z)}_{X/K} \iso \alb^{(1)}_{T/K}/G\text{ and }
\bwchow^{2,(T,Z)}_{X/K} \iso \alb^{(1)}_{T/K}/H.
\]
Using the commutativity of \eqref{E:iotaKbar}, and the fact that the morphisms from $T_{\bar K}$ (still) factor through $\alb_{T_{\bar K}/\bar K}$, we find that $G_{\bar K} = H_{\bar K}$.  Therefore, $G= H$, and $\iota_{\bar K}$ descends to $K$.
\end{proof}

\subsection{An application to rationality}       

We close this paper by observing that, at least over a perfect field,  the
theory of torsors under the algebraic representative is an
adequate substitute for torsors coming from the Chow scheme.  In the
following argument, we appeal to \cite{benoistwittenberg23} for
(sometimes classical) geometric facts, but systematically use the
torsors constructed in \Cref{T:main}.

\begin{teo}[Benoist--Wittenberg, Hassett--Tschinkel] \label{T:BW}
	Assume $K$ is perfect, and let $X\subseteq \proj^5_K$ be a smooth
	complete intersection of two quadrics.  Then $X$ is rational if and
	only if it contains a $K$-rational line.
\end{teo}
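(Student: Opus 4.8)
The plan is to apply the framework of \S\ref{S:BigAb} to $i=2$, using a codimension-$2$ cycle on $X$ whose associated torsor is isomorphic \emph{as a $K$-variety} to the parameter space, so that the obstruction is sharp. Concretely, let $F = F_1(X)$ be the Fano scheme of lines on $X$. Since $X$ is a smooth complete intersection of two quadrics in $\proj^5$, it is classical (work of Reid, and Donagi) that over $\bar K$, $F$ is an abelian surface — in fact a torsor under $\Ab^2_{X_{\bar K}} = J^3_a(X_{\bar K})$ — and that the incidence correspondence $Z = \{(t,x) : x \in \ell_t\} \subseteq F \times X$ induces, via the regular homomorphism $\phi$ realizing $\Ab^2_{X/K}$, a morphism $F \to \Ab^2_{X/K}^{(Z)}$ which after base change to $\bar K$ is an \emph{isomorphism} of torsors. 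This last point is the crux: once we know $\psi_Z : F \to A^{(Z)}$ is an isomorphism of $K$-varieties (with $A = \Ab^2_{X/K}$), then by the remark in \S\ref{S:BigAb} the scheme $F$ has a $K$-point if and only if $[A^{(Z)}] = 0$ in $\wc(A/K)$, and a $K$-point of $F$ is exactly a $K$-rational line on $X$.

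So the argument splits into two implications. The easy direction: if $X$ contains a $K$-rational line, this is a $K$-point of $F$, hence $[A^{(Z)}] = 0$; but then I must still deduce rationality of $X$. For that I would invoke the classical birational geometry of intersections of two quadrics: projecting $X$ from a line $\ell \subseteq X$ defined over $K$ realizes $X$ as birational over $K$ to a conic bundle (or, after further projection, to $\proj^3$), so a $K$-line gives $K$-rationality directly — this is the part that does not use torsors at all and goes back to the explicit geometry. The hard direction: suppose $X$ is $K$-rational; I must produce a $K$-line. Here rationality of $X$ over $K$ forces, by \cite{benoistwittenberg23, hassetttschinkel21} (or by a direct argument with the intermediate Jacobian and its principal polarization), that the torsor class $[A^{(Z)}] = [F] \in \wc(\Ab^2_{X/K})$ vanishes — roughly because the class of $F$ is a birational invariant of the appropriate flavor (it is read off from $\chow^2$ or from the third cohomology with its polarization, which rationality trivializes via the decomposition of the diagonal / the structure of $\chow_0$). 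Combined with the fact that $\psi_Z$ is an isomorphism over $K$, vanishing of $[F]$ yields a $K$-point of $F$, i.e., a $K$-line on $X$.

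The step I expect to be the main obstacle is establishing that $\psi_Z : F \to A^{(Z)}$ is an isomorphism of $K$-schemes, not merely a morphism that is an isomorphism geometrically. Over $\bar K$ this is the Reid–Donagi description of the Fano surface of lines as (a torsor under) the intermediate Jacobian, together with the identification of the Abel–Jacobi map on lines; one then needs that \Cref{L:deftorsor} produces precisely this torsor, which follows from functoriality of $\phi$ and the fact that the incidence cycle is defined over $K$ (so the cocycle $\alpha_\sigma = \phi(Z_{t_0^\sigma} - Z_{t_0})$ is exactly the one measuring the $\gal(K)$-action on $F(\bar K)$). The subtlety over a general perfect $K$ is that $\Ab^2_{X/K}$ is guaranteed to exist by \cite[Thm.~6.1]{ACMVfunctor}, and one must check the regular homomorphism $\phi$ is genuinely initial and Galois-equivariant so that the identification of $A^{(Z)}$ with $F$ descends; this is where perfectness of $K$ is used (so that $(\bwchow^2_{X/K})^\circ \iso \Ab^2_{X/K}$ and the torsor structures match). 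The remaining implications — translating between ``$F$ has a $K$-point'', ``$X$ has a $K$-line'', ``$[F] = 0$ in $\wc$'', and ``$X$ is $K$-rational'' — are then either immediate or classical, and I would cite \cite{benoistwittenberg23, hassetttschinkel21} for the rationality $\Leftrightarrow$ triviality-of-torsor equivalence, our contribution being the conceptual identification of their torsor with $A^{(Z)}$ for an arbitrary regular homomorphism.
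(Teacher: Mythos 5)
Your easy direction and your treatment of the map $F\to A^{(Z)}$ match the paper: one only needs $\psi_Z\colon F\to P_{F/K}$ to be a morphism of $K$-varieties that is an isomorphism after base change to $\bar K$ (classical, outside characteristic~$2$), and then a $K$-point of $F$, i.e.\ a $K$-line, exists if and only if $[P_{F/K}]=0$ in $\wc(\Ab^2_{X/K})$.

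The genuine gap is in the hard direction, at the step ``rationality of $X$ forces $[A^{(Z)}]=[F]=0$, roughly because the class of $F$ is a birational invariant \ldots which rationality trivializes.'' That is not how the argument goes, and as stated it is not a valid deduction: rationality does not directly kill the torsor class. What rationality actually yields (via \cite[Thm.~3.11(iii)]{benoistwittenberg23}, and this crucially uses the isomorphism $\Ab^2_{X/K}\iso\pic^0_{D/K}$ of \emph{principally polarized} abelian varieties with $D$ a genus-$2$ curve, plus Torelli, which is where perfectness of $K$ enters) is only that $[P_{F/K}]=[\pic^d_{D/K}]$ for \emph{some} degree $d$ --- a class that has no reason to vanish for general $d$. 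The vanishing then requires two further inputs you omit entirely: the geometric relation $2[P_{F/K}]=[\pic^1_{D/K}]$, obtained by comparing the torsor of lines with the torsor $P'$ attached to conics via \Cref{T:main}\ref{T:mainb} together with the identification $[P']=[\pic^1_{D/K}]$; and the parity argument: subtracting gives $[P_{F/K}]=[\pic^{1-d}_{D/K}]$, and since $D$ has genus $2$ its canonical divisor is a $K$-rational degree-$2$ zero-cycle, so $[\pic^{2n}_{D/K}]=0$ for all $n$; as one of $d$ and $1-d$ is even, $[P_{F/K}]=0$. Without the genus-$2$ curve $D$, the conic relation, and this parity trick, the hard direction reduces to citing the very equivalence being proved, so the proposal as written does not constitute a proof of that implication.
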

\begin{proof}

To start with, we work over an arbitrary field $K$.
If $X$ contains a line defined over $K$, then projection from the line shows that $X$ is rational over~$K$.
More precisely,   by the purely geometric  \cite[Prop.~4.3]{benoistwittenberg23}, projection from a $K$-line $\Lambda \subset X$ gives a diagram
\begin{equation}
\label{E:blowup}
\begin{tikzcd}
 & X' \arrow[dl, "\mu"'] \arrow[dr, "\nu"] & \\
 X & & \proj^3  \end{tikzcd}
\end{equation}
where $\mu$ is the blow-up of $X$ along $\Lambda$ and $\nu$ is the blow-up of $\proj^3$ along the discriminant locus $\Delta \subset \proj^3$ which is a smooth, geometrically connected curve of genus 2. Denote by $E$ the exceptional divisor of $\nu$.

The converse was already proven in \cite[Thm.~6.5]{hassetttschinkel21} if $K\subseteq \cx$, and in 
\cite[Thm.~4.7]{benoistwittenberg23} over an arbitrary field $K$.  
  Our modest aim is to sketch how the geometric arguments
used and established in \cite{benoistwittenberg23} can be used to give a proof using torsors under algebraic representatives in place of the Chow scheme of \cite{benoistwittenberg23}, at least in the case $K$ is perfect. 
For the reader's convenience, our notation remains consistent with that of \cite{benoistwittenberg23} wherever feasible.

 Let $F$ be the variety of lines contained in $X$, and let $G$ be the variety of conics contained in~$X$; these are smooth projective varieties over $K$. 
 From \Cref{T:main}, the universal line induces a $K$-morphism $a\colon F \rightarrow P$, where $P$ is a torsor under the algebraic representative $\Ab^2_X$, and since a smooth conic on $X$ admits a degeneration to two lines,
 the universal conic induces a $K$-morphism $\psi\colon G \rightarrow P^2$, where
       $P^2$ is the torsor under $\Ab^2_X$ that is the Baer sum of $P$ with itself.  (For the latter, combine \Cref{P:algequivtorsors} with \Cref{T:main}(b).)
       
  In the first 3 steps below, we work over an arbitrary field $K$, while in Step 4 we assume $K$ to be perfect. We refer to the recent survey \cite{wittenberg26} for a concise account of the subtleties involved in dealing with Step 4 in the imperfect case.
         \medskip

\noindent      \textbf{Step 1: The morphism $a\colon F \rightarrow P$ is an isomorphism.} 
      This is the analogue in the context of algebraic representatives of \cite[Thm.~4.5(ii)]{benoistwittenberg23}. 
     It is enough to show the
      statement after a finite Galois extension of $K$.
     Since $F$ is a nonempty smooth variety over $K$, it has a closed point with separable residue field.  We may and do assume $F(K) \neq \emptyset$.
            
 Consider then the blow-up diagram \eqref{E:blowup} above.
     The blow-up formula for Chow groups gives, after restriction to algebraically trivial cycles, 
     an isomorphism    $\mu_*\nu|^*_E\colon \chow^1(\Delta_{\bar K})_{\text{alg}} \stackrel{\sim}{\longrightarrow} \chow^2(X_{\bar K})_{\text{alg}} $ with inverse induced by a correspondence over $K$. 
     Hence, the correspondence $\mu_*\nu|^*_E$ induces an isomorphism
     $h\colon \pic^0_\Delta \stackrel{\sim}{\longrightarrow} \Ab^2_X$ of abelian varieties.
     
      Let $b\colon \Delta \rightarrow F$ be the $K$-morphism assigning to $x \in \Delta$ the line $\mu(\nu^{-1}(x))$; it is well-defined, as it assigns the residual line meeting $\Lambda$. This induces a morphism $g\colon \pic^0_\Delta \rightarrow \alb_F$.
      On the other hand, the morphism $a\colon F \rightarrow P$ induces a map $f\colon \alb_F \rightarrow \Ab^2_X$. 
      We claim that the composition $f \circ g\colon \pic^0_\Delta \rightarrow \Ab^2_X$ coincides with $h$. 
      Indeed, $f \circ g$ is induced by $a \circ b\colon \Delta \to F \rightarrow P$. 
      Consequently, it is induced by the correspondence
      $\mu_*\nu|^*_E$.
      This is exactly the correspondence inducing $h$ via the blow-up formula. 
      Hence, $f \circ g = h$. 
      Since $h$ is an isomorphism,  $g$ is injective. 
      Therefore, $F$ has first Betti number at least 4.
          Now, we know from \cite[Lem.~4.1(iii)]{benoistwittenberg23} that $F$ is a geometrically connected smooth projective surface with trivial canonical bundle (in particular, the surface $F$ is geometrically minimal).
      By the classification of minimal surfaces of Kodaira dimension 0, $F_{\bar K}$ must be an abelian variety, and so $g$ is
      an isomorphism.
           It immediately follows that $f$ is an isomorphism, and therefore $a$ is an isomorphism.
\medskip

	As a consequence, we note that in our specific geometric situation the algebraic representative $\Ab^2_X$ 
	satisfies base-change along any field extension, i.e., there is a natural $L$-isomorphism $\Ab^2_{X_L} \stackrel{\sim}{\longrightarrow} (\Ab^2_X)_L $ for any field extension $L/K$. 
This is \emph{a priori}  only known for separable extensions of fields; cf.\ \cite{ACMVfunctor}.
\medskip

     \noindent \textbf{Step 2: The morphism $\psi\colon G \rightarrow P^2$ has image a curve $D$ and  $\pic^1_D \rightarrow P^2$ is an isomorphism.}       
      Still assuming $F(K) \neq \emptyset$, consider the map $c\colon
      \Delta \rightarrow P^2$ given by $x \mapsto a(b(x)) +
      a(\Lambda)$; it is induced by the correspondence $\mu_*\nu|^*_E + \Delta \times_K \Lambda$.
                    Since $f \circ g = h$ is an isomorphism on the underlying abelian varieties, $c$ induces an isomorphism $\pic^1_\Delta \stackrel{\sim}{\longrightarrow} P^2$. 
      In particular, $c$ is a closed immersion. 
      Let us denote its image by $D = c(\Delta) \subseteq P^2$.
       Arguing as in the proof of \cite[Thm.~4.5(iii)]{benoistwittenberg23} (which is purely geometric and does not involve the Chow scheme), we have $D({K}^{\text{alg}}) = \psi(G({K}^\text{alg}))$, where ${K}^{\text{alg}}$ is the algebraic closure of $K$.
      It follows that the subvariety $D \subseteq P^2$ descends to $K$.
      We thus get a $K$-isomorphism $\pic^1_D
      \stackrel{\sim}{\longrightarrow} P^2$, inducing a $K$-isomorphism of
  abelian varieties $\pic^0_D \stackrel{\sim}{\longrightarrow} \Ab^2_X$,  without needing to assume $F(K) \neq \emptyset$. In characteristic not 2, this result goes back to Wang~\cite{wang}.
      \medskip

       \noindent \textbf{Step 3: The $K$-isomorphism $\pic^0_D \stackrel{\sim}{\longrightarrow} \Ab^2_X$ is compatible with the principal polarizations.} 
       Recall that for a smooth projective curve $C$ over $K$, the theta divisor endows $\pic^0_C$ with a principal polarization.
       Let $X$ be a smooth projective threefold over a perfect field $K$, and assume either that $X$ is geometrically rational, or that  $X$ can be lifted to a smooth projective threefold $Y$ in characteristic zero with $\chow_0(Y)\otimes \rat $ universally trivial and that $\chow_0(X_{K^{\text{alg}}})$ is universally trivial. Then, by \cite[Cor.~2.8]{benoistwittenberg20} and \cite[Cor.~13.3]{ACMVdiag} respectively, the algebraic representative 
       $\Ab^2_{X}$ is endowed with a canonical principal polarization. 
       If now $X$ is the smooth complete intersection of two quadrics in $\proj ^5_K$ with $K$ an arbitrary field, then as noted at the end of Step 1, there is a natural isomorphism $\Ab^2_{X_{K^{\text{alg}}} } \to (\Ab^2_X)_{K^{\text{alg}}}$. 
       Hence, denoting by $K^{\text{perf}}$ the perfect closure of $K$ and using the fact that Hom-schemes between abelian varieties over $K$ are \'etale over $K$, the canonical principal polarization on  $\Ab^2_{X_{K^{\text{perf}}}}$ descends uniquely to a principal polarization on $\Ab^2_X$.

       Now, by \cite[Lem.~2.10]{benoistwittenberg20}, the isomorphism $h\colon \pic^0_\Delta \stackrel{\sim}{\longrightarrow} \Ab^2_X$ of Step 1 (which is defined after some finite Galois extension of $K$) induced by the blow-up formula is compatible with the principal polarizations. 
       It follows that the $K$-isomorphism $\pic^0_D \stackrel{\sim}{\longrightarrow} \Ab^2_X$ is also compatible with the principal polarizations, as after base-change to a finite Galois extension it is obtained by precomposing~$h$ with the principally polarized isomorphism $\pic^0_D \cong \pic^0_{\Delta}$ induced by the isomorphism~$\Delta \cong D$.
            \medskip
      
       \noindent \textbf{Step 4: If $K$ is perfect and if $X$ is rational, then $P \cong \pic^d_D$ for some $d\in \integ $.}   
    This relies on the following key observation: If $X$ is a rational 3-fold over a perfect field $K$, then there exists a smooth projective curve $B$ over $K$ such that the torsor $P$ is a principally polarized direct factor of $\pic_B$. 
       This is the analogue of \cite[Thm.~3.1(vii)]{benoistwittenberg23} and can be seen as follows. 
       By resolution of indeterminacy for threefolds \cite{abhyankar},
       there exists a proper birational morphism $\pi \colon \tilde{X}\to X$
       of smooth projective varieties, together with a chain of blow-ups along smooth centers $\tilde{X} = Y_N \to \cdots \to Y_1 \to Y_0=\proj ^3_K$.
       By the projection formula \cite[Lem.~2.9]{benoistwittenberg20},  $\Ab^2_{X}$ is a direct summand of  $\Ab^2_{\tilde{X}}$ as a principally polarized abelian variety,
       and by the blow-up formula \cite[Lem.~2.10]{benoistwittenberg20}, $\Ab^2_{\tilde{X}}$ is isomorphic to $\pic^0_B$ as a principally polarized abelian variety, where $B$ is the disjoint union of the smooth projective curves being blown up in the successive $Y_j$. 
       Since $\Ab^2_{X}$ is then realized as a direct summand of $\pic^0_B$ via the action of correspondences, 
       we find by applying \cref{T:main}(a) to the universal line $\mathcal L \in \chow^2(F\times_KX)$ 
        that $P$ is a direct summand of $\pic_B$.

       We then argue as in \cite[Thm.~3.11(iii)]{benoistwittenberg23}. All indecomposable principally polarized direct factors of $\pic^0_B$ are of the form $\pic^0_{B'}$ for some connected component $B'$ of $B$. 
       Let $B'$ be the component corresponding to $\Ab^2_X$.
       By Step 3, we have a principally polarized isomorphism $\Ab^2_X \cong \pic^0_D$. 
       Since $D$ is geometrically connected of genus $\geq 2$, it follows that $\pic^0_D \cong \pic^0_{B'}$ is geometrically indecomposable of dimension $\geq 2$. Hence $B'$ is geometrically connected of genus $\geq 2$. 
       By the Torelli theorem for curves, we can identify $D$ and $B'$ in such a way that the split surjection
       $\pic^0_B \to \pic^0_D \cong \Ab^2_X$ is induced by the inclusion $D\cong B' \hookrightarrow B$.
  As such, we get $P \cong \pic^d_D$ for some $d\in \integ $. 
\medskip

\noindent \textbf{Conclusion.}   Subtracting the torsor isomorphisms of Step 2 and Step 4 in the Weil--Ch\^atelet group gives the identity:
$$
[P]=[\pic^1_{D/K}] - [\pic^d_{D/K}] = [\pic^{1-d}_{D/K}].
$$
Since $D$ has genus $2$, it has a degree-$2$ zero-cycle defined over $K$, namely its canonical divisor, and so we have that  $[\pic^{2n}_{D/K}]= 0$ for any  integer $n$. 
As one of $d$ and $1-d$ is even, we must have that $[P]=0$, and so $P(K) \neq \emptyset$. By Step 1, we find that $F(K)\neq \emptyset$, and we are done.
  \end{proof}

\begin{rem}
In fact, an elementary argument lets us extend Theorem~\ref{T:BW} to the case where $K$ is imperfect of odd characteristic.

For an arbitrary field $K$, and with no rationality hypothesis, we have $4[P] = 0$.
 Indeed, Step 2 gives $\pic^1_D \cong P^2$, i.e. $[\pic^1_D] = 2[P]$; and we have seen that since $D$ has genus 2,  $\omega_D$ is a $K$-rational divisor class of degree 2 and  thus $\pic^2_D(K) \neq \emptyset$. Hence $2[\pic^1_D] = [\pic^2_D] = 0$, and therefore $4[P] = 0$. In particular, the period $\operatorname{per}(P)$ -- that is, the order of $[P]$ in $\wc(K)$ -- divides $4$.

Now assume that $K$ has characteristic $p>0$ and that $X$ is rational over $K$. Invoking Theorem~\ref{T:BW} for the perfect field $K^{\mathrm{perf}}$ shows that $X_{K^{\mathrm{perf}}}$ contains a $K^{\mathrm{perf}}$-line, i.e., $F(K^{\mathrm{perf}}) \neq \emptyset$. Since $F$ is of finite type over $K$, that point is already defined over a finite purely inseparable extension of $K$.  In particular, the index $\operatorname{ind}(P)$ -- that is, the greatest common divisor of the degrees of all finite extensions of $K$ which trivialize $P$ -- divides some power of $p$.

Now, $\operatorname{per}(P)$ and $\operatorname{ind}(P)$ have the same prime divisors \cite[Prop.~5]{langtate}.  So if $p\neq 2$, then $\operatorname{per}(P) = 1$; $P(K) \neq \emptyset$; and Step 1 lets us conclude that $F(K)\neq \emptyset$, i.e., that $X$ contains a $K$-rational line.
\end{rem}

       \bibliographystyle{amsalpha}
       \bibliography{DCG}

\end{document}